\newcommand{\multiline}[1]{%
  \begin{tabularx}{\dimexpr\linewidth-\ALG@thistlm}[t]{@{}X@{}}
    #1
  \end{tabularx}
}
\newcommand{\norm}[1]{\|#1\|}
\newcommand{\operator}[1]{\mathsf{#1}}
\newcommand{\A}{\operator{A}}
\newcommand{\F}{\operator{F}}   
\newcommand{\E}{\operator{E}}
\renewcommand{\H}{\operator{E}}
\renewcommand{\d}{\operator{d}}
\newcommand{\dt}{\,\d t}
\newcommand{\dx}{\,\d\bm x}
\newcommand{\jmp}[1]{\left\llbracket#1\right\rrbracket}
\newcommand{\dprod}[1]{\left<#1\right>}
\newcommand{\un}[1]{{u}_N^{#1}}
\newcommand{\dissol}[1]{u_{#1}^\star}
\newcommand{\twon}[2]{\norm{#1}_{L^2(#2)}}
\newcommand{\refine}{\mathtt{refine}}
\newcommand{\nN}{\n(N)}
\newcommand{\n}{\mathfrak{n}}
\newcommand{\uN}{\mathfrak{N}}
\newcommand{\abd}{\beta}
\newcommand{\aco}{\alpha}
\newcommand{\Flc}{L_\operator{F}}
\newcommand{\dpa}{\delta}
\newcommand{\Fsm}{\nu}
\newcommand{\cstb}{C_{\mathrm{stb}}}
\newcommand{\crel}{C_{\mathrm{rel}}}
\newcommand{\qred}{q_{\mathrm{red}}}
\newcommand{\cmark}{C_{\mathrm{mark}}}
\newcommand{\copt}{c_{\mathrm{opt}}}
\newcommand{\Copt}{C_{\mathrm{opt}}}
\newcommand{\qlin}{q_{\mathrm{lin}}}
\newcommand{\clin}{C_{\mathrm{lin}}}
\newcommand{\cson}{C_{\mathrm{ref}}}
\newcommand{\cmesh}{C_{\mathrm{mesh}}}
\newcommand{\Tref}{\mathcal{T}^{\rm ref}}
\newcommand{\Trefo}{\mathcal{T}'}
\newcommand{\nl}{\mathfrak{A}}
\DeclareMathOperator{\Div}{div}
\newcommand*\patchAmsMathEnvironmentForLineno[1]{%
  \expandafter\let\csname old#1\expandafter\endcsname\csname #1\endcsname
  \expandafter\let\csname oldend#1\expandafter\endcsname\csname end#1\endcsname
  \renewenvironment{#1}%
     {\linenomath\csname old#1\endcsname}%
     {\csname oldend#1\endcsname\endlinenomath}}%
\newcommand*\patchBothAmsMathEnvironmentsForLineno[1]{%
  \patchAmsMathEnvironmentForLineno{#1}%
  \patchAmsMathEnvironmentForLineno{#1*}}%
\def\@seccntformat#1{\hspace*{4mm}%
  \protect\textup{\protect\@secnumfont
    \ifnum\pdfstrcmp{subsection}{#1}=0 \bfseries\fi
    \csname the#1\endcsname
    \protect\@secnumpunct
  }%
}
\newtheorem{theorem}{Theorem}[section]
\newtheorem{proposition}[theorem]{Proposition}
\theoremstyle{definition}
\newtheorem{remark}[theorem]{Remark}
\title[Adaptive iterative linearized FEM]{Energy contraction and optimal convergence of adaptive iterative linearized finite element methods}
\author[P.~Heid]{Pascal Heid$^1$}
\email{pascal.heid@maths.ox.ac.uk}
\author[D.~Praetorius]{Dirk Praetorius$^2$}
\email{dirk.praetorius@asc.tuwien.ac.at}
\author[T.~P.~Wihler]{Thomas P.~Wihler$^3$}
\email{wihler@math.unibe.ch}
\address[$^1$]{Mathematical Institute, University of Oxford, Woodstock Road, Oxford OX2 6GG, UK}
\address[$^2$]{TU Wien, Institute of Analysis and Scientific Computing, Wiedner Hauptstr.~8--10/E101/4, 1040 Wien, Austria}
\address[$^3$]{Mathematics Institute, University of Bern, Sidlerstr.~5, CH-3012 Bern, Switzerland}
\thanks{The authors acknowledge the financial support of the Swiss National Science Foundation (SNF), Grant No. 200021\underline{\space}182524, and of the Austrian Science Fund (FWF) Grant No.~SFB F65 and P33216.}
\keywords{Iterative linearized Galerkin methods, fixed point iterations, Lipschitz continuous and strongly monotone operators, second-order elliptic problems, energy contraction, adaptive mesh refinement, convergence of adaptive FEM, optimal computational cost}
\subjclass[2010]{35J62, 
41A25, 
47J25, 
47H05, 
49M15, 
65J15, 
65N12, 
65N22, 
65N30, 
65N50, 
65Y20
}
\begin{document}

\begin{abstract}
We revisit a unified methodology for the iterative solution of nonlinear equations in Hilbert spaces. Our key observation is that the general approach from \cite{HeidWihler:19v2,HeidWihler2:19v1} satisfies an energy contraction property in the context of (abstract) strongly monotone problems. This property, in turn, is the crucial ingredient in the recent convergence analysis in~\cite{GHPS:2020}. In particular, we deduce that adaptive iterative linearized finite element methods (AILFEMs) lead to full linear convergence with optimal algebraic rates with respect to the degrees of freedom as well as the total computational time.    
\end{abstract}

\maketitle

\def\TT{\mathcal{T}}
\def\exact{\star}
\def\dual#1#2{\langle#1\,,\,#2\rangle}
\def\XX{\mathcal{X}}
\def\N{\mathbb{N}}
\section{Introduction}

\noindent
This work deals with the effective numerical solution of quasi-linear boundary value problems of the type
\begin{align}\label{eq:strongform}
 \begin{split}
  - \Div \nl(\nabla u^\star) &= g \quad \text{in } \Omega,\\
 u^\star &= 0 \quad \text{ on } \partial\Omega,
 \end{split}
\end{align}
where $\Omega \subset \mathbb{R}^d$, $d\ge 2$, is a bounded Lipschitz domain with polytopic boundary. More precisely, given $g \in L^2(\Omega)$ and a strongly monotone and Lipschitz continuous nonlinearity $\nl \colon \mathbb{R}^d \to \mathbb{R}^d$, we aim to analyze an adaptive loop 
\begin{align}\label{eq:semr}
 \boxed{\texttt{~solve~\&~estimate~}}
 \longrightarrow
 \boxed{\texttt{~mark~}}
 \longrightarrow
 \boxed{\texttt{~refine~}}
\end{align}
for the numerical approximation, at optimal cost, of the weak solution $u^\star \in H^1_0(\Omega)$ of~\eqref{eq:strongform}, with $H^1_0(\Omega)$ being the standard Sobolev space of $H^1$-functions on~$\Omega$ with zero trace along~$\partial\Omega$.

Based on a triangulation $\TT_N$ of $\Omega$, which will be automatically refined by the adaptive loop~\eqref{eq:semr} in order to resolve the possible singularities of $u^\exact$, and a corresponding $H^1$-conforming FEM space $X_N$, the discrete variational formulation reads: 
Find $u_N^\star \in X_N$ such that
\begin{align}\label{eq:discrete}
 (\nl(\nabla u_N^\exact),\nabla v_N)_{L^2(\Omega)} = (g,v_N)_{L^2(\Omega)}
 \qquad \forall v_N \in X_N,
\end{align}
where $(\cdot,\cdot)_{L^2(\Omega)}$ denotes the $L^2(\Omega)$-inner product. We emphasize that the discrete formulation~\eqref{eq:discrete} is nonlinear, so that $u_N^\exact$ cannot be computed exactly in general. For this reason we linearize~\eqref{eq:discrete} iteratively, thereby  providing approximations $u_N^n \approx u_N^\exact$, for $n\ge 0$, which are obtained by solving certain underlying discrete linear systems.

Overall, the adaptive loop~\eqref{eq:semr} monitors and terminates the iterative linearization of the discrete nonlinear problem and steers the local mesh-refinement, hence giving rise to an \emph{adaptive iterative linearized finite element method} (AILFEM). In each step, the algorithm solves only one discrete linear system; for the ease of presentation, we will assume that this ``linear solve'' is performed exactly. 

Optimal convergence of adaptive algorithms for second-order elliptic problems is well-understood nowadays; see, e.g.,~\cite{doerfler1996,mns2000,bdd2004,stevenson2007,ckns2008,ffp2014}) for linear problems,~\cite{veeser2002,dk2008,bdk2012,MR4091593} for the $p$-Laplacian, \cite{cc2008,cc2009,cc2013} for convex minimization problems, \cite{GarauMorinZuppa:2012,ffp2014,GantnerHaberlPraetoriusStiftner:17,GHPS:2020} for the present setting, and~\cite{CarstensenFeischlPagePraetorius:14} for a general framework. Some works also account for the approximate computation of the discrete solutions by iterative (and hence inexact) solvers; see, e.g.,~\cite{bms2010,agl2013,GHPS:2020} for linear problems and~\cite{gmz2011,GantnerHaberlPraetoriusStiftner:17,HeidWihler:19v2,HeidWihler2:19v1,GHPS:2020} for nonlinear model problems. Moreover, already the seminal work~\cite{stevenson2007} has addressed the optimal computational cost of adaptive FEM for the Poisson model problem under realistic assumptions on a non-specified inexact solver and a similar result is given in~\cite{cc2012} for an adaptive Laplace eigenvalue solver.
Finally, there are various papers on \textsl{a~posteriori} error estimation which also include the iterative and inexact solution for nonlinear problems; see, e.g.,~\cite{eev2011,ev2013,aw2015,CongreveWihler:17,HeidWihler:19v2} and the references therein. 

While the AILFEM approach can be employed for more general nonlinear problems than in the present setting (see, e.g.,~\cite{ev2013} for empirical results with optimal convergence), we note that the thorough mathematical understanding is widely open. In the context of the current article, we point to the recent work~\cite{GantnerHaberlPraetoriusStiftner:17}, where 
AILFEM for the solution of~\eqref{eq:strongform} with strongly monotone nonlinearity has been analyzed in the specific context of the Zarantonello linearization approach originally proposed in~\cite{CongreveWihler:17}. Moreover, the results of~\cite{GantnerHaberlPraetoriusStiftner:17} have been extended to contractive iterative solvers in~\cite{GHPS:2020}. In particular, it has been shown 
that AILFEM based on the Zarantonello linearization approach leads to an optimal convergence rate with respect to the number of degrees of freedom as well as with respect to the overall computational cost.

The purpose of the present work is to show that the optimality results from~\cite{GHPS:2020} 
carry over to AILFEM beyond the Zarantonello linearization approach and also include, in particular, the adaptive linearization by the Ka\v{c}anov iteration or the damped Newton method. To this end, the key property is an energy contraction, which constitutes the crucial ingredient in the analysis of~\cite{GHPS:2020} and thus allows to combine and extend the recent developments from~\cite{GantnerHaberlPraetoriusStiftner:17,GHPS:2020} and~\cite{HeidWihler:19v2,HeidWihler2:19v1}.

\textbf{Outline.}
%
This work is organized as follows: In Section~\ref{section:ilg}, the quasi-linear boundary value problem~\eqref{eq:strongform} is cast into an abstract Hilbert space setting, and the iterative linearized Galerkin method from~\cite{HeidWihler:19v2,HeidWihler2:19v1} is applied. We notice from~\cite{HeidWihler:19v2} that this framework covers the Zarantonello and Ka\v{c}anov iteration as well as the damped Newton method. The new core argument of our paper is Theorem~\ref{prop:energyreduction}, which provides an energy contraction property that is crucial for the application of the results from~\cite{GHPS:2020}. In the spirit of~\cite{CarstensenFeischlPagePraetorius:14}, Section~\ref{section:ailfem} formulates abstract assumptions on the FEM discretization and on the \textsl{a~posteriori} error estimators, and then states the AILFEM loop (Algorithm~\ref{alg:praetal}). Section~\ref{sc:convergence} recalls the main results from~\cite{GHPS:2020}, which now allow to generalize the analysis of AILFEM beyond the Zarantonello linearization approach. Moreover, we discuss in detail how these results extend those of~\cite{GHPS:2020,HeidWihler2:19v1}. Numerical experiments in Section~\ref{sec:examples} underpin the theoretical findings. Some brief conclusion is drawn in Section~\ref{section:conclusion}.

\section{Iterative linearized Galerkin approach}\label{section:ilg}

\subsection{Abstract model problem}

On a real Hilbert space $X$ with inner product~$(\cdot,\cdot)_X$ and induced norm~$\|\cdot\|_X$, we consider a nonlinear operator~$\F:\,X\to X^\star$, where $X^\star$ denotes the dual space of $X$. 
We consider the following nonlinear operator equation:
\begin{equation}\label{eq:F=0}
\F(u)=0 \quad\text{in }X^\star.
\end{equation}
With $\dprod{\cdot,\cdot}$ signifying the duality pairing on~$X^\star\times X$, the weak form of this problem reads:
\begin{equation}\label{eq:F=0weak}
\text{Find } u\in X \text{ such that} \quad  \dprod{\F(u),v}=0\qquad \forall v\in X.
\end{equation}
For the purpose of this work, we suppose that $\F$ satisfies the following conditions: 
\begin{enumerate}[(F1)]
\item \emph{Lipschitz continuity:} There exists a constant $\Flc>0$ such that 
\begin{align*}
\left|\dprod{\F(u)-\F(v),w}\right| \leq \Flc \norm{u-v}_X \norm{w}_X \qquad \forall u,v,w \in X.
\end{align*}
\item \emph{Strong monotonicity:} There exists a constant $\Fsm>0$ such that 
\begin{align*}
 \Fsm \norm{u-v}_X^2 \leq \dprod{\F(u)-\F(v),u-v} \qquad \forall u,v  \in X.
\end{align*}
\end{enumerate}
Assuming (F1) and (F2), the main theorem of strongly monotone operators guarantees that~\eqref{eq:F=0} (or equivalently~\eqref{eq:F=0weak}) has a unique solution $u^\star \in X$; see, e.g., \cite[\S3.3]{Necas:86} or \cite[Thm.~25.B]{Zeidler:90}.

\subsection{Iterative linearization}

Following the recent approach~\cite{HeidWihler:19v2}, we apply a general fixed point iteration scheme for the solution of~\eqref{eq:F=0}. For given~$v\in X$, we consider a linear and invertible \emph{preconditioning operator} $\A[v] \in \mathcal{L}(X, X^\star)$. Then, the nonlinear problem~\eqref{eq:F=0} is equivalent to $\A[u]u = f(u)$, where $f(u):= \A[u]u-\F(u)$.
For any suitable initial guess~$u^0\in X$, this leads to the following iterative scheme:
\begin{equation}\label{eq:fp0}
\text{Find } u^{n+1}\in X \text{ such that} \quad \A[u^n]u^{n+1}=f(u^n) \quad \text{in } X^\star \qquad \forall n\ge 0.
\end{equation}
Note that the above iteration is a \emph{linear equation} for~$u^{n+1}$, thereby rendering~\eqref{eq:fp0} an \emph{iterative linearization scheme} for~\eqref{eq:F=0}. Given $u^n\in X$, the weak form of~\eqref{eq:fp0} is based on the bilinear form $a(u^n;v,w):=\dprod{\A[u^n]v,w}$, for $v,w\in X$, and the solution~$u^{n+1}\in X$ of~\eqref{eq:fp0} can be obtained from 
\begin{equation}\label{eq:itweak}
a(u^n;u^{n+1},w)=\dprod{f(u^n),w}\qquad \forall w\in X.
\end{equation}
Throughout, for any~$u\in X$, we suppose that the bilinear form~$a(u;\cdot,\cdot)$ is uniformly coercive and bounded, i.e., there are two constants~$\alpha,\beta>0$ independent of $u \in X$ such that
\begin{equation}\label{eq:coercive}
a(u;v,v) \geq \aco \|v\|_X^2 \qquad\forall v \in X,
\end{equation}
and
\begin{equation}\label{eq:continuity}
a(u;v,w) \leq \abd \norm{v}_X \norm{w}_X \qquad\forall  v,w \in X.
\end{equation}
For any given~$u^n\in X$, owing to the Lax--Milgram theorem, these two properties imply that the linear equation~\eqref{eq:itweak} admits a unique solution~$u^{n+1}\in X$.

\subsection{Iterative linearized Galerkin approach (ILG) and energy contraction}\label{sc:Y}

In order to cast~\eqref{eq:itweak} into a computational framework, we consider 
closed (and mainly finite dimensional)
subspaces $Y\subseteq X$, endowed with the inner product and norm on $X$. 
Denote by $u_Y^\star \in Y$ the unique solution of the equation
\begin{align} \label{eq:F=0Y}
\dprod{\F(u^\star_Y),v}=0 \qquad \forall v\in Y,
\end{align}
where existence and uniqueness of $u^\star_Y$ follows again from the main theorem of strongly monotone operators.

Then, 
ILG~\cite{HeidWihler:19v2} is based on restricting the weak iteration scheme~\eqref{eq:itweak} to $Y$. Specifically, for a prescribed initial guess $u_Y^0 \in Y$, define a sequence $\{u_Y^n\}_{n \geq 0}\subset Y$ inductively by
\begin{equation}\label{eq:itweakY}
a(u_Y^n;u_Y^{n+1},w)=\dprod{f(u_Y^n),w}\qquad \forall w\in Y.
\end{equation}
Note that \eqref{eq:itweakY} admits a unique solution, since the conditions~\eqref{eq:coercive} and~\eqref{eq:continuity} above remain valid for the restriction to $Y \subseteq X$. For the purpose of the convergence results in this paper, we require that~\eqref{eq:F=0} originates from an energy minimization problem:
\begin{enumerate}[(F1)]
\setcounter{enumi}{2}
\item The operator $\F$ possesses a \emph{potential}, i.e.,~there exists a G\^{a}teaux differentiable (energy) functional $\H:X \to \mathbb{R}$ such that $\H'=\F$.
\end{enumerate}
Furthermore, we suppose a \emph{monotonicity condition} on the energy functional $\E$ to hold:
\begin{enumerate}[(F1)]
\setcounter{enumi}{3}
\item There exists a (uniform) constant $C_\H>0$ such that, for any closed subspace $Y \subseteq X$,
the sequence defined by~\eqref{eq:itweakY} fulfils the bound
 \begin{align} \label{eq:Hconstant}
  \H(u_Y^{n})-\H(u_Y^{n+1}) \geq C_\H \norm{u_Y^{n}-u_Y^{n+1}}_X^2 \qquad \forall n \geq 0,
 \end{align}
where $\H$ is the potential of $\F$ (restricted to $Y \subseteq X$) introduced in (F3).
\end{enumerate}
Then, it is well-known (see, e.g.,~\cite[Lem.~2]{HeidWihler2:19v1}) that~\eqref{eq:F=0Y} can equivalently be formulated as follows:
\begin{align*}
\text{Find $u^\star_Y \in Y$ such that} \quad \E(u^\star_Y) = \min_{v \in Y} \E(v).
\end{align*}
Moreover, we have the following energy contraction result for ILG~\eqref{eq:itweakY}.

\begin{theorem} \label{prop:energyreduction}
Consider the sequence~$\{u_Y^n\}_{n\ge 0}\subset Y$ generated by the iteration~\eqref{eq:itweakY}. If the conditions~{\rm (F1)--(F4)} are satisfied, and, for any $u \in Y$, the form $a(u;\cdot,\cdot)$ fulfills \eqref{eq:coercive}--\eqref{eq:continuity}, then there holds the energy contraction property
\begin{align}\label{eq:energyreduction0}
0\le \E(u_Y^{n+1})-\E(u_Y^\star) \leq q_{\rm ctr}^2 \left[\E(u_Y^n)-\E(u_Y^\star)\right]\qquad \forall n\ge 0,
\end{align}
with a contraction constant
\begin{align}\label{eq:energyreduction}
 0 \le q_{\rm ctr}:=\left(1-{ 2 C_\H \nu^2}{\beta^{-2} \Flc^{-1}}\right)^{\nicefrac12} < 1
\end{align}
independent of the subspace $Y$ and of the iteration number~$n$. In particular, the sequence~$\{u_Y^n\}_{n\ge 0}$ converges to the unique solution $u_Y^\star \in Y$ of \eqref{eq:F=0Y}.
\end{theorem}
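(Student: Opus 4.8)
The plan is to derive the contraction from three ingredients: (i) the quadratic lower/upper bounds on the energy $\E$ around the Galerkin minimizer $u_Y^\star$ that follow from (F1)--(F3), (ii) the linearized Galerkin identity satisfied by the correction $u_Y^{n+1}-u_Y^n$, and (iii) the energy decay (F4). First I would record the convexity/smoothness bounds: since $\F=\E'$ with $\F$ strongly monotone and Lipschitz, integrating $t\mapsto\dprod{\F(w+t(v-w)),v-w}$ over $[0,1]$ and using (F1)--(F2) gives, for all $v,w\in X$,
\begin{align*}
 \dprod{\F(w),v-w}+\tfrac{\nu}{2}\norm{v-w}_X^2 \le \E(v)-\E(w) \le \dprod{\F(w),v-w}+\tfrac{\Flc}{2}\norm{v-w}_X^2 .
\end{align*}
Taking $w=u_Y^\star$ and arbitrary $v\in Y$ and using $\dprod{\F(u_Y^\star),v-u_Y^\star}=0$ from \eqref{eq:F=0Y} yields $\tfrac{\nu}{2}\norm{v-u_Y^\star}_X^2\le\E(v)-\E(u_Y^\star)\le\tfrac{\Flc}{2}\norm{v-u_Y^\star}_X^2$ for all $v\in Y$; in particular $\E(u_Y^{n+1})\ge\E(u_Y^\star)$, which is the left inequality in \eqref{eq:energyreduction0}.

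Next I would extract the linearized Galerkin identity: subtracting $a(u_Y^n;u_Y^n,w)$ from \eqref{eq:itweakY} and recalling $f(u_Y^n)=\A[u_Y^n]u_Y^n-\F(u_Y^n)$ shows $a(u_Y^n;u_Y^{n+1}-u_Y^n,w)=-\dprod{\F(u_Y^n),w}$ for all $w\in Y$. Testing with the admissible choice $w=u_Y^n-u_Y^\star\in Y$, bounding the bilinear form by continuity \eqref{eq:continuity}, and bounding the right-hand side from below by the convexity estimate $\dprod{\F(u_Y^n),u_Y^n-u_Y^\star}\ge\E(u_Y^n)-\E(u_Y^\star)+\tfrac{\nu}{2}\norm{u_Y^n-u_Y^\star}_X^2$, I obtain $\beta\,\norm{u_Y^{n+1}-u_Y^n}_X\,\norm{u_Y^n-u_Y^\star}_X\ge\big(\E(u_Y^n)-\E(u_Y^\star)\big)+\tfrac{\nu}{2}\norm{u_Y^n-u_Y^\star}_X^2$. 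A Young (AM--GM) estimate on the right-hand side cancels one factor of $\norm{u_Y^n-u_Y^\star}_X$ and leaves the key bound $\norm{u_Y^{n+1}-u_Y^n}_X^2\ge 2\nu\beta^{-2}\big(\E(u_Y^n)-\E(u_Y^\star)\big)$ (the case $u_Y^n=u_Y^\star$ being trivial).

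Inserting this into (F4) gives $\E(u_Y^n)-\E(u_Y^{n+1})\ge 2C_\E\nu\beta^{-2}\big(\E(u_Y^n)-\E(u_Y^\star)\big)$, hence $\E(u_Y^{n+1})-\E(u_Y^\star)\le(1-2C_\E\nu\beta^{-2})\big(\E(u_Y^n)-\E(u_Y^\star)\big)$. Since (F1)--(F2) force $\nu\le\Flc$, we have $1-2C_\E\nu\beta^{-2}\le 1-2C_\E\nu^2\beta^{-2}\Flc^{-1}=\qctr^2$, which proves \eqref{eq:energyreduction0}; moreover $\qctr^2<1$ since all constants are positive and finite, and $\qctr^2\ge0$ because the two nonnegative sides of the contraction (for a non-stationary start) force $1-2C_\E\nu\beta^{-2}\ge0$. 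Finally, iterating \eqref{eq:energyreduction0} shows $\E(u_Y^n)-\E(u_Y^\star)\to0$ geometrically, and the quadratic lower bound $\tfrac{\nu}{2}\norm{u_Y^n-u_Y^\star}_X^2\le\E(u_Y^n)-\E(u_Y^\star)$ then yields $u_Y^n\to u_Y^\star$ in $X$.

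The main obstacle is the middle step: one must produce the \emph{energy difference} $\E(u_Y^n)-\E(u_Y^\star)$ itself (not merely $\norm{u_Y^n-u_Y^\star}_X^2$) in the lower bound for the correction. This hinges on testing the linearized Galerkin identity with $u_Y^\star-u_Y^n$ — admissible precisely because we solve the nonlinear Galerkin problem \eqref{eq:F=0Y} on $Y$ — and on correctly balancing the two terms $\E(u_Y^n)-\E(u_Y^\star)$ and $\tfrac{\nu}{2}\norm{u_Y^n-u_Y^\star}_X^2$ so that the right power of the energy difference survives the cancellation; everything else is bookkeeping with the monotonicity, Lipschitz, and boundedness constants. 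Note that coercivity \eqref{eq:coercive} is not needed for the contraction itself — only, via Lax--Milgram, for the well-posedness of \eqref{eq:itweakY}, which is already granted.
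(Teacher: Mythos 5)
Your proof is correct, and it actually produces a (marginally) sharper contraction factor before weakening it to the stated one. The paper's own proof first derives the norm \textsl{a~posteriori} bound $\norm{u_Y^\star-u_Y^n}_X\le\beta\nu^{-1}\norm{u_Y^n-u_Y^{n+1}}_X$ (from (F2), the Galerkin relation \eqref{eq:F=0Y}, the iteration \eqref{eq:itweakY}, and continuity \eqref{eq:continuity}), and then passes to the energy difference by invoking \emph{both} sides of the quadratic equivalence $\tfrac{\nu}{2}\norm{u_Y^\star-v}_X^2\le\E(v)-\E(u_Y^\star)\le\tfrac{\Flc}{2}\norm{u_Y^\star-v}_X^2$, which yields $\norm{u_Y^n-u_Y^{n+1}}_X^2\ge 2\nu^2\beta^{-2}\Flc^{-1}\left[\E(u_Y^n)-\E(u_Y^\star)\right]$ and hence the stated $q_{\rm ctr}^2=1-2C_\E\nu^2\beta^{-2}\Flc^{-1}$. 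You instead keep the energy difference intact at the testing step: you retain the full convexity lower bound $\dprod{\F(u_Y^n),u_Y^n-u_Y^\star}\ge\E(u_Y^n)-\E(u_Y^\star)+\tfrac{\nu}{2}\norm{u_Y^n-u_Y^\star}_X^2$ rather than dropping the energy term, and a Young inequality with weight $\nu/\beta^2$ absorbs the $\tfrac{\nu}{2}\norm{u_Y^n-u_Y^\star}_X^2$ term, giving directly $\norm{u_Y^n-u_Y^{n+1}}_X^2\ge 2\nu\beta^{-2}\left[\E(u_Y^n)-\E(u_Y^\star)\right]$ and the factor $1-2C_\E\nu\beta^{-2}\le q_{\rm ctr}^2$. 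The factor $\nu\Flc^{-1}\le 1$ present in the paper's constant is precisely the loss incurred by converting norm-squared to energy and back; your Young step avoids it, and (F1) enters only to see that your sharper factor is dominated by the advertised $q_{\rm ctr}^2$. All remaining steps (the quadratic energy bounds \eqref{eq:energydifference}, the linearized Galerkin identity, the nonnegativity argument for $q_{\rm ctr}^2$, and the geometric-decay conclusion) coincide with the paper's.
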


\begin{proof}
With (F2) and since $u_Y^\star$ is the (unique) solution of \eqref{eq:F=0Y}, for~$n\ge 0$, we first observe that
 \begin{align*}
  \Fsm \norm{u_Y^\star-u_Y^{n}}_X^2 &\stackrel{\rm(F2)}\leq \dprod{\F(u_Y^\star)-\F(u_Y^{n}),u_Y^\star-u_Y^{n}} 
  \stackrel{\eqref{eq:F=0Y}}= \dprod{\F(u_Y^{n}),u_Y^{n}-u_Y^\star}.
 \end{align*}
Recalling that $f(u)=\A[u]u-\F(u)$,~\eqref{eq:itweakY} and \eqref{eq:continuity} prove that
\begin{align*}
 \dprod{\F(u_Y^{n}),u_Y^{n}-u_Y^\star}
 \stackrel{\eqref{eq:itweakY}}= a(u_Y^{n};u_Y^{n}-u_Y^{n+1},u_Y^{n}-u_Y^\star)
 \stackrel{\eqref{eq:continuity}}\leq \beta \norm{u_Y^{n+1}-u_Y^{n}}_X \norm{u_Y^{n}-u_Y^\star}_X.
\end{align*}
Altogether, we derive the {\sl a~posteriori} error estimate
\begin{equation} \label{eq:discreteerrorestimate} 
 \norm{u_Y^\star-u_Y^{n}}_X \leq \beta\nu^{-1} \norm{u_Y^{n}-u_Y^{n+1}}_X \qquad \forall n \ge 0.
\end{equation}
Next, we exploit the structural assumptions (F1)--(F3) to obtain the well-known inequalities
\begin{equation} \label{eq:energydifference}
  \frac{\Fsm}{2} \norm{u_Y^\star - v}_X^2 \leq \H(v)-\H(u_Y^\star) \leq \frac{\Flc}{2} \norm{u_Y^\star-v}^2_X \qquad \forall v \in Y;
 \end{equation}
see, e.g., \cite[Lem.~2]{HeidWihler2:19v1} or \cite[Lem.~5.1]{GantnerHaberlPraetoriusStiftner:17}. For any $n\ge 0$, we thus infer that
\begin{align*}
0 \stackrel{\eqref{eq:energydifference}}\le 
\E(u_Y^{n+1})-\E(u_Y^\star) 
&=\E(u_Y^{n})-\E(u_Y^\star)-[\E(u_Y^{n})-\E(u_Y^{n+1})]\\
&\overset{\eqref{eq:Hconstant}}{\leq} \E(u_Y^{n})-\E(u_Y^\star)-C_\H \norm{u_Y^{n}-u_Y^{n+1}}_X^2\\
&\overset{\eqref{eq:discreteerrorestimate}}{\leq} \E(u_Y^{n})-\E(u_Y^\star)-\frac{C_\H \nu^2}{\beta^2} \norm{u_Y^\star-u_Y^n}_X^2\\
&\overset{\eqref{eq:energydifference}}{\leq} \E(u_Y^{n})-\E(u_Y^\star) -\frac{ 2 C_\H \nu^2}{\beta^2 \Flc}\left[\E(u_Y^{n})-\E(u_Y^\star)\right]\\
&= q_{\rm ctr}^2 \, [\E(u_Y^{n})-\E(u_Y^\star)].
\end{align*}
In particular, this proves that $0 \le q_{\rm ctr}^2 < 1$. 
Iterating this inequality, we obtain that 
\begin{align*}
0 \leq \frac{\nu}{2}\norm{u_Y^\star-u_Y^{n}}^2_X \overset{\eqref{eq:energydifference}}{\leq}  \E(u_Y^{n})-\E(u_Y^\star) 
\leq  q_{\rm ctr}^{2n} \, [\E(u_Y^{0})-\E(u_Y^\star)]
\quad \forall n \ge 0.
\end{align*}
Therefore, we conclude that $u_Y^n \to u_Y^\star$ in $Y$ as $n \to \infty$.
\end{proof}

\begin{remark}
If $\F$ satisfies {\rm (F1)--(F3)} and if the ILG bilinear form $a(\cdot;\cdot,\cdot)$ from 
\eqref{eq:itweak} is coercive~\eqref{eq:coercive} with coercivity constant~$\alpha > \nicefrac{\Flc}{2}$, then {\rm (F4)} is fulfilled; see~\cite[Prop.~1]{HeidWihler2:19v1}. Moreover, upon imposing alternative conditions, we may still be able to satisfy~\eqref{eq:Hconstant} even when~$\alpha \le \nicefrac{\Flc}{2}$. For instance,~\cite[Rem.~2.8]{HeidWihler:19v2} proposed an {\sl a~posteriori} step size strategy that guarantees the bound~\eqref{eq:Hconstant} in the context of the damped Newton method. This argument can be generalized to other methods containing a damping parameter. 
\end{remark}

\subsection{Examples}

Let $\F$ from~\eqref{eq:F=0} satisfy~(F1)--(F3). In this section, we briefly recall three examples from~\cite{HeidWihler:19v2,HeidWihler2:19v1} that fulfill~\eqref{eq:coercive}--\eqref{eq:continuity} as well as~(F4), and thus fit into the abstract framework of the previous subsection.

\subsubsection*{Zarantonello (or Picard) linearization approach:} The Zarantonello iteration reads
\begin{align}\label{eq:zarantonello}
 (u_Y^{n+1}, w)_X = (u_Y^{n}, w)_X - \delta \dprod{\F(u_Y^n),w}
 \quad \forall w \in Y~\forall n\ge 0;
\end{align}
cf.~Zarantonello's original report~\cite{Zarantonello:60} or the monographs~\cite[\S3.3]{Necas:86} and~\cite[\S25.4]{Zeidler:90}.

\begin{proposition}[{\hspace*{-3pt}\cite[Thm.~2.2]{HeidWihler:19v2}, \cite[Rem.~1]{HeidWihler2:19v1}}]\label{prop:zarantonello}
The Zarantonello linearization approach~\eqref{eq:zarantonello} fits into the iterative linearization framework with $\A[u]v = \delta^{-1}(v,\cdot)_X$ and $\alpha = \delta^{-1} = \beta$. If $0 < \delta < \nicefrac{2\nu}{L_\F^2}$, then there holds norm contraction 
\begin{align}\label{eq:zarantonello:norm}
 0 \le \norm{u_Y^\star - u_Y^{n+1}}_{X}^2 \le [1 - \delta(2\nu -\delta L_\F^2)] \, \norm{u_Y^\star - u_Y^{n}}_{X}^2
 \qquad \forall n \ge 0.
\end{align}
Finally, any choice $0 < \delta < \nicefrac{2}{L_\F}$ guarantees the validity of {\rm(F4)} with $C_\E = \nicefrac{1}{\delta} - \nicefrac{L_\F}{2} > 0$, and hence the energy contraction~\eqref{eq:energyreduction0}--\eqref{eq:energyreduction}.
\end{proposition}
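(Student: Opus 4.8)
The plan is to establish the three assertions in turn: the algebraic identification of the scheme~\eqref{eq:zarantonello} with the abstract iteration, the classical Banach-type norm contraction, and the energy-monotonicity bound~{\rm(F4)}; once {\rm(F4)} is in place, the energy contraction follows at once from Theorem~\ref{prop:energyreduction}. For the first point, I would set $\A[u]v := \delta^{-1}(v,\cdot)_X$, i.e.\ the $\delta$-scaled Riesz isomorphism $X\to X^\star$, which is linear, invertible, and independent of $u$. With $f(u):=\A[u]u-\F(u)$, the abstract step $\A[u^n]u^{n+1}=f(u^n)$ reads $(u^{n+1},w)_X=(u^n,w)_X-\delta\dprod{\F(u^n),w}$ for all $w$ (on $X$, and likewise on $Y$), which is precisely~\eqref{eq:zarantonello}. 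Since $a(u;v,w)=\dprod{\A[u]v,w}=\delta^{-1}(v,w)_X$, the coercivity~\eqref{eq:coercive} and continuity~\eqref{eq:continuity} bounds hold with $\alpha=\beta=\delta^{-1}$, using $(v,v)_X=\|v\|_X^2$ and Cauchy--Schwarz.

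For the norm contraction, on the fixed subspace $Y$ I would introduce $\F_Y(v)\in Y$ via $(\F_Y(v),w)_X=\dprod{\F(v),w}$ for all $w\in Y$, so that~\eqref{eq:zarantonello} becomes the fixed-point map $\Phi(v):=v-\delta\F_Y(v)$ with $u_Y^\star=\Phi(u_Y^\star)$ by~\eqref{eq:F=0Y}. Expanding $\|\Phi(u)-\Phi(v)\|_X^2=\|u-v\|_X^2-2\delta(u-v,\F_Y(u)-\F_Y(v))_X+\delta^2\|\F_Y(u)-\F_Y(v)\|_X^2$, I would bound the middle term below by {\rm(F2)} (since $(u-v,\F_Y(u)-\F_Y(v))_X=\dprod{\F(u)-\F(v),u-v}\ge\nu\|u-v\|_X^2$) and the last term above by {\rm(F1)} (since $\|\F_Y(u)-\F_Y(v)\|_X\le L_\F\|u-v\|_X$, taking the supremum over $w\in Y$ with $\|w\|_X=1$), obtaining $\|\Phi(u)-\Phi(v)\|_X^2\le[1-\delta(2\nu-\delta L_\F^2)]\,\|u-v\|_X^2$; the choice $u=u_Y^\star$, $v=u_Y^n$ yields~\eqref{eq:zarantonello:norm}. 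To see the factor lies in $[0,1)$ for $0<\delta<\nicefrac{2\nu}{L_\F^2}$, note it is $<1$ exactly on this range and $1-2\delta\nu+\delta^2L_\F^2=(1-\delta\nu)^2+\delta^2(L_\F^2-\nu^2)\ge0$, using $\nu\le L_\F$ (from combining {\rm(F1)} and {\rm(F2)} with test function $u-v$).

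For {\rm(F4)}, writing $d^n:=u_Y^n-u_Y^{n+1}$, I would use {\rm(F3)} together with the continuity of $\F$ to note that $t\mapsto\E(u_Y^n-t d^n)$ is $C^1$ on $[0,1]$, so the fundamental theorem of calculus gives $\E(u_Y^n)-\E(u_Y^{n+1})=\int_0^1\dprod{\F(u_Y^n-t d^n),d^n}\dt$. Splitting the integrand as $\dprod{\F(u_Y^n),d^n}+\dprod{\F(u_Y^n-t d^n)-\F(u_Y^n),d^n}$, the first term equals $\delta^{-1}\|d^n\|_X^2$ (test~\eqref{eq:zarantonello} with $w=d^n$), while the second is $\ge-L_\F t\|d^n\|_X^2$ by {\rm(F1)}; integrating over $t\in[0,1]$ yields $\E(u_Y^n)-\E(u_Y^{n+1})\ge(\delta^{-1}-\nicefrac{L_\F}{2})\|d^n\|_X^2$, i.e.\ {\rm(F4)} with $C_\E=\delta^{-1}-\nicefrac{L_\F}{2}$, which is positive precisely when $0<\delta<\nicefrac{2}{L_\F}$. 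With {\rm(F1)--(F4)} and~\eqref{eq:coercive}--\eqref{eq:continuity} verified, Theorem~\ref{prop:energyreduction} then delivers the energy contraction~\eqref{eq:energyreduction0}--\eqref{eq:energyreduction}.

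I expect the only mildly delicate step to be the verification of {\rm(F4)}: one must set up the integral representation of the energy increment via $\E'=\F$ correctly and carefully isolate the gain term $\delta^{-1}\|d^n\|_X^2$ coming from the iteration~\eqref{eq:zarantonello} before absorbing the Lipschitz remainder. Everything else reduces to the Riesz representation theorem, Cauchy--Schwarz, and the standard Banach fixed-point computation.
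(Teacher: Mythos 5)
Your proposal is correct and follows essentially the same route as the sources the paper cites for this statement (the paper itself gives no inline proof): the Riesz-map identification with $\alpha=\beta=\delta^{-1}$, the classical expand-and-estimate fixed-point computation for~\eqref{eq:zarantonello:norm} using {\rm(F1)}--{\rm(F2)}, and the integral representation $\E(u_Y^n)-\E(u_Y^{n+1})=\int_0^1\dprod{\F(u_Y^n-t\,d^n),d^n}\dt$ for {\rm(F4)}. Your verification of {\rm(F4)} is exactly the specialization of the general criterion recalled in the remark after Theorem~\ref{prop:energyreduction} (coercivity $\alpha>\nicefrac{\Flc}{2}$ yields~\eqref{eq:Hconstant} with constant $\alpha-\nicefrac{\Flc}{2}$), which for the Zarantonello choice $\alpha=\delta^{-1}$ gives $C_\E=\nicefrac{1}{\delta}-\nicefrac{\Flc}{2}$, so no gaps remain.
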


\subsubsection*{Ka\v{c}anov linearization approach:} Suppose that the nonlinear operator $\F$ from~\eqref{eq:F=0} takes the form $\F(u) = \A[u]u -g$, with $\A[u] \in \mathcal{L}(X,X^\star)$ and $g \in X^\star$. Then, the Ka\v{c}anov iteration reads
\begin{align}\label{eq:kacanov}
 \dprod{\A[u_Y^n]u_Y^{n+1}, w} = \dprod{g,w}
 \qquad \forall w \in Y ~ \forall n\ge 0,
\end{align}
where $\A[\cdot]$ also takes the role of the preconditioning operator for the iterative linearization; cf.~Ka\v{c}anov's work~\cite{KACHANOV1959} introducing the iteration scheme in the context of variational methods for plasticity problems, or the monograph \cite[\S25.13]{Zeidler:90}.

\begin{proposition}[{\hspace*{-3pt}\cite[Thm.~2.5]{HeidWihler:19v2}}]
Suppose that $\A$ is symmetric, i.e.,
\begin{align*}
 \dprod{\A[u] v, w} = \dprod{\A[u] w, v}
 \quad \forall u,v,w \in X,
\end{align*}
and that the energy functional $\E$ satisfies that
\begin{align*}
 \E(u) - \E(v) \ge \frac12 \big[ \dprod{\A[u] u, u} - \dprod{\A[u] v, v} \big]
 \quad \forall u,v \in X.
\end{align*}
Then, the Ka\v{c}anov linearization approach~\eqref{eq:kacanov} guarantees validity of {\rm(F4)} with $C_\E = \nicefrac{\alpha}{2}$, and hence the energy contraction~\eqref{eq:energyreduction0}--\eqref{eq:energyreduction}.
\end{proposition}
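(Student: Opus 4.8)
The plan is to verify the energy-monotonicity condition~{\rm(F4)} for the Ka\v{c}anov iteration~\eqref{eq:kacanov} with the asserted constant $C_\E=\nicefrac{\aco}{2}$, and then to invoke Theorem~\ref{prop:energyreduction}. This is legitimate because, once~{\rm(F4)} is available, all remaining hypotheses of that theorem already hold: {\rm(F1)--(F3)} are assumed, and the Ka\v{c}anov bilinear form $a(u;v,w):=\dprod{\A[u]v,w}$ is uniformly coercive~\eqref{eq:coercive} and bounded~\eqref{eq:continuity} (see~\cite[Thm.~2.5]{HeidWihler:19v2}). Hence Theorem~\ref{prop:energyreduction} directly yields the energy contraction~\eqref{eq:energyreduction0}--\eqref{eq:energyreduction}, and the whole task reduces to proving $\E(u_Y^{n})-\E(u_Y^{n+1})\ge\tfrac{\aco}{2}\,\norm{u_Y^{n}-u_Y^{n+1}}_X^2$ for every $n\ge0$.

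To this end I would first exploit the symmetry of $\A[\cdot]$ to read~\eqref{eq:kacanov} as a minimization problem: for fixed $u_Y^{n}$, the next iterate $u_Y^{n+1}$ is the unique minimizer over $Y$ of the quadratic \emph{linearized energy}
\[
 \E^{(n)}(v):=\tfrac12\dprod{\A[u_Y^{n}]v,v}-\dprod{g,v},
\]
because~\eqref{eq:kacanov} is precisely its Euler--Lagrange equation (note that for Ka\v{c}anov $f(u)=\A[u]u-\F(u)=g$, so~\eqref{eq:itweakY} reduces to~\eqref{eq:kacanov}). Since~\eqref{eq:coercive} makes $\E^{(n)}$ an $\aco$-strongly convex quadratic with minimizer $u_Y^{n+1}$, minimality together with the symmetry of $\A[u_Y^{n}]$ and~\eqref{eq:kacanov} tested against $w:=u_Y^{n}-u_Y^{n+1}\in Y$ gives the identity
\[
 \E^{(n)}(u_Y^{n})-\E^{(n)}(u_Y^{n+1})=\tfrac12\dprod{\A[u_Y^{n}](u_Y^{n}-u_Y^{n+1}),u_Y^{n}-u_Y^{n+1}}\ \ge\ \tfrac{\aco}{2}\,\norm{u_Y^{n}-u_Y^{n+1}}_X^2 .
\]
The remaining step --- and the only substantial one --- is to pass from this decrease of the frozen-coefficient energy $\E^{(n)}$ to the genuine energy $\E$: the assumed structural inequality, applied with $u:=u_Y^{n}$ and $v:=u_Y^{n+1}$ (and combined with~\eqref{eq:kacanov} to match the linear terms), states exactly that $\E(u_Y^{n})-\E(u_Y^{n+1})\ge\E^{(n)}(u_Y^{n})-\E^{(n)}(u_Y^{n+1})$, so chaining the two bounds delivers~{\rm(F4)} with $C_\E=\nicefrac{\aco}{2}$.

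I expect the genuine difficulty to lie entirely in that transfer step, i.e.\ in the hypothesis $\E(u)-\E(v)\ge\tfrac12[\dprod{\A[u]u,u}-\dprod{\A[u]v,v}]$: it is precisely the structural condition that compensates for the fact that the true energy $\E$ moves the linearization point at every iteration, whereas the surrogate $\E^{(n)}$ freezes it at $u_Y^{n}$; in the concrete problem~\eqref{eq:strongform} this inequality amounts to a concavity property of the scalar density generating $\E$ (equivalently, to a monotonicity property of the diffusion coefficient). Everything else is routine bilinear-form algebra, where the only point needing a little care is to use the symmetry of $\A[\cdot]$ and the Galerkin relation~\eqref{eq:kacanov} at the right places, so that the linearized-energy difference $\E^{(n)}(u_Y^{n})-\E^{(n)}(u_Y^{n+1})$ reduces to $\tfrac12\dprod{\A[u_Y^{n}]e,e}$ with $e:=u_Y^{n}-u_Y^{n+1}$, after which~\eqref{eq:coercive} finishes the estimate.
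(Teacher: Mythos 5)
Your strategy --- identify the Ka\v{c}anov update as the minimizer of the frozen quadratic energy $\E^{(n)}(v)=\tfrac12\dprod{\A[u_Y^n]v,v}-\dprod{g,v}$, compute $\E^{(n)}(u_Y^n)-\E^{(n)}(u_Y^{n+1})=\tfrac12\dprod{\A[u_Y^n]e,e}\ge\tfrac{\aco}{2}\norm{e}_X^2$ with $e:=u_Y^n-u_Y^{n+1}$ (correct, using symmetry and \eqref{eq:kacanov} tested with $e$), and then transfer this decrease to the true energy via the structural hypothesis --- is precisely the argument behind the cited result \cite[Thm.~2.5]{HeidWihler:19v2}; the paper itself gives no proof and only refers to that source, and invoking Theorem~\ref{prop:energyreduction} once (F4) is available is legitimate.

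The problem is the transfer step as you justify it. The hypothesis as printed bounds $\E(u)-\E(v)$ from below by $\tfrac12\big[\dprod{\A[u]u,u}-\dprod{\A[u]v,v}\big]$, which is \emph{not} equal to $\E^{(n)}(u)-\E^{(n)}(v)$: the two differ by the linear term $\dprod{g,u-v}$. Concretely, with $u=u_Y^n$, $v=u_Y^{n+1}$, symmetry and \eqref{eq:kacanov} give $\tfrac12\big[\dprod{\A[u_Y^n]u_Y^n,u_Y^n}-\dprod{\A[u_Y^n]u_Y^{n+1},u_Y^{n+1}}\big]=\tfrac12\dprod{\A[u_Y^n]e,e}+\dprod{g,e}$, so the printed hypothesis only yields $\E(u_Y^n)-\E(u_Y^{n+1})\ge\tfrac{\aco}{2}\norm{e}_X^2+\dprod{g,e}$, and the sign of $\dprod{g,e}$ is uncontrolled; (F4) does not follow this way. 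Your claim that the assumption ``states exactly'' the surrogate comparison $\E(u_Y^n)-\E(u_Y^{n+1})\ge\E^{(n)}(u_Y^n)-\E^{(n)}(u_Y^{n+1})$ is true only if the hypothesis carries the additional term $-\dprod{g,u-v}$ on its right-hand side, which is how the condition is formulated in \cite[Thm.~2.5]{HeidWihler:19v2} and which is what the model problem of Section~\ref{sec:examples} actually satisfies (via concavity of $\psi$, with the load terms cancelling identically); the condition as printed here fails already for $u=0$ and $v$ scaled small with $\dprod{g,v}<0$. So you should flag this as a misprint in the stated proposition and run your chain with the corrected hypothesis --- in which case your proof is complete and coincides with the source's argument; taken literally against the printed hypothesis, your transfer step has a genuine gap.
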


\subsubsection*{Damped Newton method:} Suppose that the nonlinear operator $\F$ from~\eqref{eq:F=0} is G\^ateaux differentiable. Then, the damped Newton method reads
\begin{align}\label{eq:newton}
 [\F'(u_Y^n)] u_Y^{n+1} = [\F'(u_Y^n)] u_Y^n - \delta(u_Y^n) \F(u_Y^n)
 \qquad \forall n\ge 0,
\end{align}
where $\delta(u_Y^n) > 0$ is a damping parameter.

\begin{proposition}[{\hspace*{-4pt}\cite[Thm.~2.6]{HeidWihler:19v2}}]\label{prop:newton}
For any~$u\in X$, suppose that the bilinear form $a'(u;v,w) := \dprod{[\F'(u)] v,w}$ is uniformly coercive and bounded, i.e., there are two constants~$\alpha',\beta'>0$ independent of $u \in X$ such that
\begin{equation*}
a'(u;v,v) \geq \aco' \|v\|_X^2 \qquad\forall v \in X,
\end{equation*}
and
\begin{equation*}
a'(u;v,w) \leq \abd' \norm{v}_X \norm{w}_X \qquad\forall  v,w \in X.
\end{equation*}
Let $0 < \delta_{\rm min} < \delta_{\rm max} < \nicefrac{2\alpha'}{L_\F}$. Then, provided that $\delta_{\rm min} \le \delta(u) \le \delta_{\rm max}$, the damped Newton method~\eqref{eq:newton} fits into the iterative linearization framework with $\A[u] = \delta(u)^{-1}\F'(u)$, $\alpha = \nicefrac{\alpha'}{\delta_{\rm max}}$, and  $\beta = \nicefrac{\beta'}{\delta_{\rm min}}$. Moreover, it there holds {\rm (F4)} with $C_\E = \nicefrac{\alpha'}{\delta_{\rm max}} - \nicefrac{L_\F}{2} > 0$, and hence the energy contraction~\eqref{eq:energyreduction0}--\eqref{eq:energyreduction}.
\end{proposition}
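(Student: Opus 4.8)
The plan is to check, in order, the three assertions: (i) that the damped Newton iteration \eqref{eq:newton} is an instance of the abstract linearization \eqref{eq:fp0} with $\A[u]=\delta(u)^{-1}\F'(u)$; (ii) the coercivity and boundedness constants $\alpha=\alpha'/\delta_{\rm max}$, $\beta=\beta'/\delta_{\rm min}$; and (iii) condition (F4) with the stated $C_\E$. Throughout, $\F$ satisfies (F1)--(F3) by the standing assumption of this section, so once (i)--(iii) are in place, Theorem~\ref{prop:energyreduction} yields the energy contraction \eqref{eq:energyreduction0}--\eqref{eq:energyreduction}.

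For (i) and (ii), put $\A[u]:=\delta(u)^{-1}\F'(u)$; this is linear, bounded (by boundedness of $a'$) and invertible (Lax--Milgram, using coercivity of $a'$), hence an admissible preconditioner. With $f(u)=\A[u]u-\F(u)=\delta(u)^{-1}\F'(u)u-\F(u)$, the linear equation $\A[u^n]u^{n+1}=f(u^n)$ reads $\delta(u^n)^{-1}\F'(u^n)u^{n+1}=\delta(u^n)^{-1}\F'(u^n)u^n-\F(u^n)$; multiplying by $\delta(u^n)$ recovers exactly \eqref{eq:newton}. The induced bilinear form is $a(u;v,w)=\dprod{\A[u]v,w}=\delta(u)^{-1}a'(u;v,w)$. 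Since $0<\delta_{\rm min}\le\delta(u)\le\delta_{\rm max}$, coercivity of $a'$ gives $a(u;v,v)\ge\delta_{\rm max}^{-1}\alpha'\norm{v}_X^2$ and boundedness gives $a(u;v,w)\le\delta_{\rm min}^{-1}\beta'\norm{v}_X\norm{w}_X$, i.e.\ \eqref{eq:coercive}--\eqref{eq:continuity} hold with $\alpha=\alpha'/\delta_{\rm max}$ and $\beta=\beta'/\delta_{\rm min}$.

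For (iii), observe first that $\delta_{\rm max}<2\alpha'/L_\F$ is equivalent to $\alpha=\alpha'/\delta_{\rm max}>L_\F/2$, so (F4) follows already from the Remark after Theorem~\ref{prop:energyreduction} (i.e.\ \cite[Prop.~1]{HeidWihler2:19v1}). To pin down the constant I would argue directly. Fix a closed subspace $Y\subseteq X$ and set $d^n:=u_Y^n-u_Y^{n+1}\in Y$. The map $t\mapsto\H(u_Y^{n+1}+td^n)$ is $C^1$ on $[0,1]$ — its derivative $t\mapsto\dprod{\F(u_Y^{n+1}+td^n),d^n}$ exists by Gâteaux differentiability of $\H$ (F3) and is continuous since (F1) makes $t\mapsto\F(u_Y^{n+1}+td^n)$ Lipschitz into $X^\star$ — so the fundamental theorem of calculus gives
\begin{align*}
\H(u_Y^n)-\H(u_Y^{n+1})
&=\int_0^1\dprod{\F(u_Y^{n+1}+td^n),d^n}\dt\\
&=\dprod{\F(u_Y^n),d^n}+\int_0^1\dprod{\F(u_Y^{n+1}+td^n)-\F(u_Y^n),d^n}\dt.
\end{align*}
Since $u_Y^{n+1}+td^n-u_Y^n=(t-1)d^n$, condition (F1) bounds the remainder integral from below by $-L_\F\norm{d^n}_X^2\int_0^1(1-t)\dt=-\tfrac{L_\F}{2}\norm{d^n}_X^2$. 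For the leading term, testing \eqref{eq:itweakY} with $w=d^n$ and using $\F(u_Y^n)=\A[u_Y^n]u_Y^n-f(u_Y^n)$ yields $\dprod{\F(u_Y^n),d^n}=a(u_Y^n;u_Y^n-u_Y^{n+1},d^n)=a(u_Y^n;d^n,d^n)\ge\alpha\norm{d^n}_X^2$ by \eqref{eq:coercive}. Combining the two estimates gives $\H(u_Y^n)-\H(u_Y^{n+1})\ge(\alpha-\tfrac{L_\F}{2})\norm{d^n}_X^2$, which is \eqref{eq:Hconstant} with $C_\E=\alpha-L_\F/2=\alpha'/\delta_{\rm max}-L_\F/2>0$.

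The only step requiring any care is the $C^1$/fundamental-theorem-of-calculus justification in (iii), but this is routine given (F1) and (F3); the genuine content of the proposition is simply that the damping constraint $\delta_{\rm max}<2\alpha'/L_\F$ is precisely what forces $\alpha>L_\F/2$, which is then fed into Theorem~\ref{prop:energyreduction} to obtain \eqref{eq:energyreduction0}--\eqref{eq:energyreduction}.
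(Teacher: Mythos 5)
Your argument is correct and follows exactly the route the paper intends: it verifies the framework identification $\A[u]=\delta(u)^{-1}\F'(u)$ with $\alpha=\nicefrac{\alpha'}{\delta_{\rm max}}$, $\beta=\nicefrac{\beta'}{\delta_{\rm min}}$ by scaling $a'$ with $\delta(u)^{-1}$, and then obtains (F4) from $\alpha>\nicefrac{L_\F}{2}$ — which is precisely the content of the remark following Theorem~\ref{prop:energyreduction} (i.e.\ \cite[Prop.~1]{HeidWihler2:19v1}) and of the cited \cite[Thm.~2.6]{HeidWihler:19v2}, whose proof your fundamental-theorem-of-calculus computation with the lower bound $a(u_Y^n;d^n,d^n)\ge\alpha\norm{d^n}_X^2$ reconstructs. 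No gaps; the conclusion via Theorem~\ref{prop:energyreduction} is then immediate.
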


%

\section{Adaptive iterative linearized finite element method (AILFEM)}\label{section:ailfem}


In this section, we thoroughly formulate AILFEM. 
Throughout,
we assume that $\F$ satisfies {\rm (F1)--(F4)} and that \eqref{eq:coercive} and \eqref{eq:continuity} hold true. We will apply the ILG approach~\eqref{eq:itweakY} to a sequence of nested Galerkin subspaces $X_0 \subset X_1\subset X_2\subset\ldots \subset X_N\subset\ldots\subset X$, with corresponding sequences $\{u_N^n\}_{n \ge 0} \subset X_N$, for $N \ge 0$, obtained from the weak formulation
\begin{equation} \label{eq:lindisproblemN}
 a(\un{n};\un{n+1},v)=\dprod{f(\un{n}),v} \qquad \forall v \in X_N.
\end{equation}%
For each $N\ge 0$, the space $X_N$ employed in~\eqref{eq:lindisproblemN} will be a conforming finite element space that is associated to an admissible triangulation $\mathcal{T}_N$ of an underlying bounded Lipschitz domain $\Omega \subset \mathbb{R}^d$, with 
$d \ge 2$. 
Then, AILFEM (Algorithm~\ref{alg:praetal} below)
exploits the interplay of adaptive mesh refinements and the iterative scheme~\eqref{eq:lindisproblemN}; for given $u_0^0 \in X_0$, the initial guesses for the iterations on each individual Galerkin space are defined by $u_{N+1}^0 = u_N^{\nN} \in X_N \subset X_{N+1}$ for all $N \ge 0$ and appropriate indices $\nN \ge 1$.

\subsection{Mesh refinements}

We adopt the framework from~\cite[\S2.2--2.4]{GHPS:2020}, with slightly modified notation. 
Consider a shape-regular mesh refinement strategy $\refine(\cdot)$ such as, e.g., the newest vertex bisection~\cite{Mitchell:91}. For a subset $\mathcal{M}_N$ of marked elements of a regular 
triangulation $\mathcal{T}_N$, let $\refine(\mathcal{T}_N,\mathcal{M}_N)$ be the coarsest regular refinement of $\mathcal{T}_N$ such that all elements $\mathcal{M}_N$ have been refined. Specifically, we write $\refine(\mathcal{T}_N)$ for the set of all possible meshes that can be generated from $\mathcal{T}_N$ by (repeated) use of $\refine(\cdot)$. For a mesh $\Tref_N \in \refine(\mathcal{T}_N)$, we assume the nestedness of the corresponding finite element spaces $X^{\rm ref}_N$ and $X_N$, respectively, i.e., $X_N \subseteq X^{\rm ref}_N$. In the sequel, starting from a given initial triangulation $\mathcal{T}_0$ of $\Omega$, we let $\mathbb{T}:=\refine(\mathcal{T}_0)$ be the set of all possible refinements of $\mathcal{T}_0$.

With regards to the optimal convergence rate of the algorithm with respect Section~\ref{sc:convergence}), a few assumptions on the mesh refinement strategy are required, cf.~\cite[\S2.8]{GHPS:2020}. These are satisfied, in particular, for the newest vertex bisection. 

\begin{enumerate}[(R1)]
 \item \emph{Splitting property:} Each refined element is split into at least two and at most $\cson$ many subelements, where $\cson\ge 2 $ is a generic constant. In particular, for all $\mathcal{T} \in \mathbb{T}$, and for any $\mathcal{M} \subseteq \mathcal{T}$, the (one-level) refinement $\Trefo:=\refine(\mathcal{T},\mathcal{M})$  satisfies 
 \begin{align*}
  \# (\mathcal{T} \setminus \Trefo) + \# \mathcal{T} \leq \# \Trefo \leq \cson \# (\mathcal{T} \setminus \Trefo)+\# (\mathcal{T} \cap \Trefo).
 \end{align*}
 Here, $\mathcal{T} \setminus \Trefo$ is the set of all elements in $\mathcal{T}$ which have been refined in $\Trefo$, and $\mathcal{T} \cap \Trefo$ comprises all unrefined elements. 
 \item \emph{Overlay estimate:} For all meshes $\mathcal{T} \in \mathbb{T}$ and all refinements $\Tref_1,\Tref_2 \in \refine(\mathcal{T})$, there exists a common refinement denoted by 
 \[
 \Tref_1 \oplus \Tref_2 \in \refine(\Tref_1) \cap \refine(\Tref_2) \subseteq \refine(\mathcal{T}),
 \] 
 which satisfies 
$  \# (\Tref_1 \oplus \Tref_2) \leq \#\Tref_1 + \# \Tref_2-\# \mathcal{T}$.
\item \emph{Mesh-closure estimate:} There exists a constant $\cmesh>0$ such that, for each sequence $\{\mathcal{T}_{N}\}_{N \geq 1}$ of successively refined meshes, i.e., $\mathcal{T}_{{N+1}}:=\refine(\mathcal{T}_{N},\mathcal{M}_{N})$ for some $\mathcal{M}_{N} \subseteq \mathcal{T}_{N}$, there holds that
\begin{equation*}
 \# \mathcal{T}_{N} -\# \mathcal{T}_0 \leq \cmesh \sum_{J=0}^{N-1} \# \mathcal{M}_{J} \qquad \forall N\in\mathbb{N}.
\end{equation*}
\end{enumerate}

\subsection{Error estimators}

For a mesh $\mathcal{T}_N \in \mathbb{T}$ associated to a discrete space $X_N$, suppose that there exists a \emph{computable local refinement indicator} $\eta_N:\,\mathcal{T}_N\times X_N\to\mathbb{R}$, with $\eta_N(T,v) \geq 0$ for all $T \in \mathcal{T}_N$ and $v \in X_N$. Then, for any $v \in X_N$ and $\mathcal{U}_N \subseteq \mathcal{T}_N$, let 
\begin{align}\label{eq:eta}
\eta_N(\mathcal{U}_N,v):= \bigg( \sum_{T \in \mathcal{U}_N} \eta_N(T,v)^2 \bigg)^{\nicefrac{1}{2}} \qquad \text{and} \qquad \eta_N(v):=\eta_N(\mathcal{T}_N,v).
\end{align}
We recall the following axioms of adaptivity from~\cite{CarstensenFeischlPagePraetorius:14} for the refinement indicators: There are fixed constants $\cstb, \ \crel \geq 1$ and $0<\qred<1$ such that, for all $\mathcal{T}_N \in \mathbb{T}$ and $\Tref_N \in \refine(\mathcal{T}_N)$, with associated refinement indicators $\eta_N$ and $\eta^{\rm ref}_N$, the following properties hold, cf.~\cite[\S2.8]{GHPS:2020}:
\begin{enumerate}[({A}1)]
\item \emph{Stability:} $|\eta_N(\mathcal{U}_N,v) - \eta^{\rm ref}_N(\mathcal{U}_N,w)| \leq \cstb \norm{v-w}_X$, for all $v \in X_N, w \in X^{\rm ref}_N$ and all $\mathcal{U}_N \subseteq \mathcal{T}_N \cap \Tref_N$.
\item \emph{Reduction:} $\eta^{\rm ref}_N(\Tref_N \setminus \mathcal{T}_N,v) \leq \qred \eta_N(\mathcal{T}_N \setminus \Tref_N,v)$, for all $v \in X_N$.
\item \emph{Reliability:}  For the error between the exact solution $u^\star\in X$ of~\eqref{eq:F=0} and the exact discrete solution $u^\star_N \in X_N$ of~\eqref{eq:F=0Y}, we have the {\sl a~posteriori} error estimate $\norm{u^\star-\dissol{N}}_X \leq \crel \eta_N(\dissol{N})$.
\item \emph{Discrete reliability:} 
$\norm{u^{\star,{\rm ref}}_N-\dissol{N}}_X \leq \crel \eta_N (\mathcal{T}_N \setminus \Tref_N,\dissol{N})$, where $u^{\star,{\rm ref}}_N \in X^{\rm ref}_N$ is the solution of~\eqref{eq:F=0Y} on the discrete space $X^{\rm ref}_N$ associated with $\Tref_N$.
\end{enumerate}

We emphasize that (A1)--(A4) are satisfied for the usual $h$-weighted residual error estimators in the specific context of our application in Section~\ref{sec:examples}, cf.~\eqref{eq:errorestimatorscl}. 

\subsection{AILFEM algorithm}

We recall the adaptive algorithm from~\cite{GantnerHaberlPraetoriusStiftner:17} (and its generalization~\cite{GHPS:2020}), which was studied in the specific context of finite element discretizations of the Zarantonello iteration; see Algorithm~\ref{alg:praetal}. These algorithms are closely related to the general adaptive ILG approach in~\cite{HeidWihler:19v2}. The key idea is the same in all algorithms: On a given discrete space, we iterate the linearization scheme~\eqref{eq:lindisproblemN} as long as the linearization error estimator dominates. Once the ratio of the linearization error estimator $[\H(u_N^n) - \H(u_N^{n-1})]^{\nicefrac{1}{2}}$ and the {\sl a~posteriori} error estimator $\eta_N(u_N^n)$ falls below a prescribed tolerance, the discrete space is refined appropriately. 

\begin{algorithm} 
\caption{(AILFEM)}
\label{alg:praetal}
\begin{algorithmic}[1]
\State Prescribe adaptivity parameters $\lambda >0$, $0 < \theta \leq 1$, and $\cmark \geq 1$. Moreover, set~$N:=0$ and~$n:=0$. Start with an initial triangulation $\mathcal{T}_0$, a corresponding finite element space $X_0$, and an arbitrary initial guess $u_0^0 \in X_0$.
\While{true}
\Repeat\ with $n \gets 0$
\State Perform a single iterative linearization step~\eqref{eq:lindisproblemN} on $X_N$ to obtain $\un{n+1}$ from $\un{n}$.
\State Update $n \gets n+1$.
\Until $[\E(\un{n})-\E(\un{n-1})]^{\nicefrac{1}{2}} \le \lambda \eta_N(\un{n})$
\State \multiline{Determine a marking set $\mathcal{M}_N \subseteq \mathcal{T}_N$ with minimal cardinality (up to the multiplicative constant $\cmark\ge 1$) satisfying the D\"orfler criterion $\theta \eta_N(\un{n}) \leq \eta_N(\mathcal{M}_N,\un{n})$ from~\cite{doerfler1996}, and set $\mathcal{T}_{N+1}:=\refine(\mathcal{T}_N,\mathcal{M}_N)$.}
\State Set $\n(N):= n$ and define $u_{N+1}^0 := \un{\nN}$ by inclusion $X_{N+1} \supseteq X_N$.
\State Update $N\gets N+1$.
\EndWhile.
\end{algorithmic}
\end{algorithm}

\begin{remark}\label{rem:algorithm}
Under the conditions {\rm (A1)} and {\rm (A3)}, we notice some facts about Algorithm~\ref{alg:praetal} from \cite[Prop.~3]{GHPS:2020} and \cite[Prop.~4.4 \& 4.5]{GantnerHaberlPraetoriusStiftner:17}: For any $N \geq 0$, there holds the {\sl a~posteriori} error estimate
\begin{align}\label{eq:aposterioriestimator}
\norm{u^\star-\un{\nN}}_X \leq C_{\rm rel}' \eta_N(\un{\nN}),
\end{align}
where $C_{\rm rel}'$ depends only on $\nu,\beta,\lambda,\Flc,\crel,\cstb$, and $C_\E$. In particular, if the repeat loop terminates with $\eta_N(\un{\nN})=0$ for some $N \geq 0$, then $\un{\nN}=u^\star$, i.e.,~the exact solution is obtained. Moreover, in the non-generic case that the repeat loop in Algorithm~\ref{alg:praetal} does not terminate after finitely many steps, for some $N \geq 0$, the generated sequence $\{u_N^n\}_{n \geq 0}$ converges to $\un{\star}=u^\star$ (in particular, the solution $u^\star$ is discrete).
\end{remark}

\section{Optimal convergence of AILFEM}\label{sc:convergence}

We are now ready to outline the linear convergence of the sequence of approximations from Algorithm~\ref{alg:praetal} to the unique solution of \eqref{eq:F=0}. In addition, the rate optimality with respect to the overall computational costs will be discussed. 

\subsection{Step counting}

Following~\cite{GHPS:2020}, we introduce an ordered index set 
\[
\mathcal{Q}:=\{(N,n) \in \mathbb{N}_0^2: \text{ index pair } (N,n) \text{ occurs in Algorithm~\ref{alg:praetal} $\wedge$ $n < \nN$}\},
\]
where $\nN\ge 1$ counts the number of steps in the repeat loop for each $N$. We exclude the pair $(N,\nN)$ from $\mathcal{Q}$, since either $u_{N+1}^0=\un{\nN}$ and $(N+1,0) \in \mathcal{Q}$ or even $\nN := \infty$ if the loop does not terminate after finitely many steps; see also Remark~\ref{rem:algorithm}. Observing that Algorithm~\ref{alg:praetal} is sequential, the index set $\mathcal{Q}$ is naturally ordered: For $(N,n),(N',n') \in \mathcal{Q}$, we write  $(N',n')<(N,n)$ if and only if  $(N',n')$ appears earlier in Algorithm~\ref{alg:praetal} than $(N,n)$.  With this order, we can define the \emph{total step counter}
\[
|(N,n)|:=\#\{(N',n') \in \mathcal{Q}:(N',n')<(N,n)\}=n+\sum_{N'=0}^{N-1} \n(N'),
\]
which provides the total number of solver steps up to the computation of $\un{n}$. Finally, we introduce the notation $\uN:=\sup\{N \in \mathbb{N}:(N,0) \in \mathcal{Q}\}$. 

\subsection{Linear convergence}

Based on the notation used in Algorithm~\ref{alg:praetal}, let us introduce the quasi-error
\begin{align} \label{eq:quasierror}
\Delta_N^n:=\norm{u^\star-\un{n}}_X+\eta_N(\un{n}) \qquad \forall (N,n) \in \overline{\mathcal{Q}}:=\mathcal{Q} \cup \{(N,\nN):\nN < \infty\},
\end{align}
and suppose that the estimator $\eta_N$ satisfies {\rm (A1)--(A3)}. Then, appealing to 
Theorem~\ref{prop:energyreduction}, we make the crucial observation that 
the energy contraction property~\eqref{eq:energyreduction0}--\eqref{eq:energyreduction} of
the ILG method~\eqref{eq:itweakY} 
coincides with the property~\cite[(C1)]{GHPS:2020}.
Consequently,~\cite[Thm.~4]{GHPS:2020} directly applies to our setting:

\begin{theorem}[{\hspace{1sp}\cite[Thm.~4]{GHPS:2020}}]\label{thm:ghps:conv}
Suppose {\rm (A1)--(A3)}. Then, for all $0<\theta\leq 1$ and $0<\lambda<\infty$, there exist constants $\clin \geq 1$ and $0<\qlin<1$ such that the quasi-error~\eqref{eq:quasierror} is linearly convergent in the sense of 
\[
\Delta_N^n \leq \clin \qlin^{|(N,n)|-|(N',n')|}\Delta_{N'}^{n'} \qquad \forall (N,n),(N',n') \in \mathcal{Q} \text{ with } (N',n')<(N,n).
\]
The constants $\clin$ and $\qlin$ depend only on $\Flc,\nu,\beta, C_\H,\cstb,\qred,\crel$, and the adaptivity parameters $\theta$ and $\lambda$.
\end{theorem}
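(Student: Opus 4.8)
The strategy is to derive the statement directly from \cite[Thm.~4]{GHPS:2020}: the hypotheses there are precisely the estimator axioms (A1)--(A3), which are assumed here, together with an abstract solver contraction in the energy, labelled (C1) in \cite{GHPS:2020}. Hence the only point to verify is that the ILG iteration~\eqref{eq:lindisproblemN} that drives Algorithm~\ref{alg:praetal} satisfies (C1) with a contraction constant that is uniform over the whole sequence of Galerkin spaces.

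First I would recall that \cite[(C1)]{GHPS:2020} asks, on each discrete space $X_N$ and for the sequence $\{\un{n}\}_{n\ge0}$ produced by the linearization solver, for an energy contraction
\[
\E(\un{n+1})-\E(\dissol{N}) \le \qctr^2\,\big[\E(\un{n})-\E(\dissol{N})\big]\qquad\forall n\ge0,
\]
with some $0\le\qctr<1$ independent of $N$ and $n$, and, in addition, for the two-sided equivalence between $\E(v)-\E(\dissol{N})$ and $\norm{\dissol{N}-v}_X^2$ that links the energy to the term $\norm{u^\star-\un{n}}_X$ in the quasi-error~\eqref{eq:quasierror}. The first requirement is exactly the conclusion~\eqref{eq:energyreduction0} of Theorem~\ref{prop:energyreduction}, applied with $Y=X_N$: since $\F$ satisfies (F1)--(F4) and the ILG bilinear form obeys~\eqref{eq:coercive}--\eqref{eq:continuity} by assumption, Theorem~\ref{prop:energyreduction} yields (C1) with the explicit value of $\qctr$ from~\eqref{eq:energyreduction}, which depends only on $C_\H,\nu,\beta,\Flc$. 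The second requirement is the standard inequality~\eqref{eq:energydifference}, which holds uniformly under (F1)--(F3).

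It then remains to match the remaining structural ingredients with those of \cite[Thm.~4]{GHPS:2020}: the spaces form a nested sequence $X_0\subset X_1\subset\ldots\subset X$ generated by $\refine(\cdot)$, the initial guesses are passed on by $u_{N+1}^0:=\un{\nN}$ via $X_N\subseteq X_{N+1}$, and the inner \texttt{repeat} loop is stopped by the estimator-ratio test of Algorithm~\ref{alg:praetal}; these are exactly the assumptions under which that theorem is proved. With (A1)--(A3) and (C1) verified, \cite[Thm.~4]{GHPS:2020} provides constants $\clin\ge1$ and $0<\qlin<1$ with $\Delta_N^n\le\clin\,\qlin^{|(N,n)|-|(N',n')|}\Delta_{N'}^{n'}$ for all ordered pairs in $\mathcal{Q}$; inspecting the constants in its proof shows that $\clin,\qlin$ depend only on $\Flc,\nu,\beta,C_\H,\cstb,\qred,\crel,\theta$, and $\lambda$, the dependence on $\qctr$ being absorbed into that on $C_\H,\nu,\beta,\Flc$ through~\eqref{eq:energyreduction}.

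The only genuine work is thus bookkeeping. The main ``obstacle'' is not a hard estimate but rather the careful alignment of notation between the abstract solver framework of \cite{GHPS:2020} and the ILG iteration used here --- above all, checking that the contraction in (C1) may be taken uniform in $N$, which is guaranteed because the constant $\qctr$ in Theorem~\ref{prop:energyreduction} is independent of the subspace $Y$. Once this dictionary is fixed, nothing new needs to be proved.
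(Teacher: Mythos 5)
Your argument is exactly the paper's own: the theorem is obtained by observing that the energy contraction~\eqref{eq:energyreduction0}--\eqref{eq:energyreduction} from Theorem~\ref{prop:energyreduction} is precisely the solver property \cite[(C1)]{GHPS:2020}, after which \cite[Thm.~4]{GHPS:2020} applies verbatim under (A1)--(A3). Your additional checks (the energy--norm equivalence~\eqref{eq:energydifference}, the uniformity of $q_{\rm ctr}$ in $N$, and the alignment of Algorithm~\ref{alg:praetal} with the framework of \cite{GHPS:2020}) are correct elaborations of the same route, not a different one.
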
 

\begin{remark}
When relating the current results to those of~\cite{GantnerHaberlPraetoriusStiftner:17,GHPS:2020,HeidWihler2:19v1} we note the following observations:
\begin{enumerate}[(a)]
\item The paper~\cite{GHPS:2020} verifies~\cite[(C1)]{GHPS:2020} only for an iterative PCG solver for symmetric, linear, and elliptic PDEs. In addition, for the Zarantonello linearization approach, \cite{GHPS:2020} exploits the weaker contraction property~\cite[(C2)]{GHPS:2020} stemming from~\eqref{eq:zarantonello:norm}. Therefore,~\cite[Thm.~4]{GHPS:2020} merely holds for AILFEM based on the Zarantonello linearization approach under the additional assumption that $0 < \lambda \ll 1$ is sufficiently small. The same restriction on $\lambda$ applies to the more general AILFEM analysis in~\cite{HeidWihler2:19v1}.
\item In \cite{GantnerHaberlPraetoriusStiftner:17,HeidWihler2:19v1}, convergence was proved for the final iterates only, i.e.,
\begin{align*}
 \Delta_N^{\n(N)} \leq \clin \qlin^{N-N'}\Delta_{N'}^{\n(N')} \qquad \forall N, N' \in \N_0
 \text{ with } N' < N \text{ and } (N+1,0) \in \mathcal{Q}.
\end{align*}
\end{enumerate}
Based on Theorem~\ref{prop:energyreduction}, the current analysis constitutes a massive improvement of the above results with respect to the ensuing aspects:
\begin{enumerate}[(a)]
\item Theorem~\ref{thm:ghps:conv} applies to an entire class of linearization approaches for strongly monotone and Lipschitz continuous nonlinearities (including, e.g., the Zarantonello/Picard, Ka\v{c}anov, and damped Newton schemes).
\item Unlike~\cite{GHPS:2020} (and~\cite{GantnerHaberlPraetoriusStiftner:17,HeidWihler2:19v1}), the present setting does no longer require any restriction on $\lambda > 0$ for guaranteed linear convergence of AILFEM. Furthermore, all iterates $\Delta_N^n$ with $(N,n) \in \mathcal{Q}$ are now covered by Theorem~\ref{thm:ghps:conv}.
\end{enumerate} 
\end{remark}%

\subsection{Optimal convergence rate and computational work}

Furthermore, we address the optimal convergence rate of the quasi-error~\eqref{eq:quasierror} with respect to the degrees of freedom as well as the computational work. As before, we can directly apply a result from~\cite{GHPS:2020} owing to the energy contraction from Theorem~\ref{prop:energyreduction}. For its statement, we need further notation: First, for $L \in \mathbb{N}_0$, let $\mathbb{T}(L)$ be the set of all refinements $\mathcal{T}$ of $\mathcal{T}_0$ with $\#\mathcal{T}-\#\mathcal{T}_0 \leq L$. Next, for $s>0$, define 
\begin{align*} 
\norm{u^\star}_{\mathbb{A}_s}:=\sup_{L \in \mathbb{N}}(L+1)^s \inf_{\mathcal{T}_{\rm{opt}} \in \mathbb{T}(L)}\big[\norm{u^\star-\dissol{\rm{opt}}}_X+\eta_{\rm{opt}}(\dissol{\rm{opt}})\big] \in \mathbb{R}_\geq 0 \cup \{\infty\},
\end{align*}
where $\dissol{\rm{opt}}$ is the discrete solution~\eqref{eq:F=0Y} on the finite element space related to an \emph{optimal} (in terms of the above infimum) mesh~$\mathcal{T}_{\rm opt}$. For $s>0$, we note that $\norm{u^\star}_{\mathbb{A}_s}<\infty$ if and only if the quasi-error converges at least with rate $s$ along a sequence of optimal meshes.

\begin{theorem}[{\hspace{1sp}\cite[Thm.~7]{GHPS:2020}}]\label{thm:ghps:optimal}
Suppose {\rm (R1)--(R3)} and {\rm (A1)--(A4)}, and define 
\[
\lambda_{\rm{opt}}:=\frac{1-q_{\rm ctr}}{q_{\rm ctr} \cstb} \sqrt{\nicefrac{\nu}{2}}.
\]
Let $0<\theta\leq 1$ and $0<\lambda<\lambda_{\rm{opt}} \theta$ such that 
\[0<\theta':=\frac{\theta+\nicefrac{\lambda}{\lambda_{\rm{opt}}}}{1-\nicefrac{\lambda}{\lambda_{\rm{opt}}}}<(1+\cstb^2\crel^2)^{-\nicefrac12}.\]
Then, for any $s>0$, there exist positive constants $\copt,\Copt$ such that
\begin{equation} \label{eq:convergencework}
\begin{split}
\copt^{-1} \norm{u^\star}_{\mathbb{A}_s} & \leq \sup_{(N',n') \in \mathcal{Q}} \left(\#\mathcal{T}_{N'}-\#\mathcal{T}_0+1\right)^s \Delta_{N'}^{n'} \\
& \leq \sup_{(N',n') \in \mathcal{Q}} \Bigg(\sum_{\substack{ (N,n) \in \mathcal{Q} \\ (N,n) \leq (N',n')}} \# \mathcal{T}_N\Bigg)^s \Delta_{N'}^{n'}
 \leq \Copt \max\{\norm{u^\star}_{\mathbb{A}_s},\Delta_0^0\}.
\end{split}
\end{equation}
The constant $\copt>0$ depends only on $\nu,\Flc,\cson,\cstb,\crel, \# \mathcal{T}_0$, and $s$, and additionally on $\uN$ and $N_0$, respectively, if $\uN<\infty$ or $\eta_{N_0}(u_{N_0}^{\n(N_0)})=0$ for some $(N_0+1,0) \in \mathcal{Q}$; moreover, the constant $\Copt>0$ depends only on $C_{\rm rel}',\nu,\cstb,\qred,\crel,\cmesh,1-\nicefrac{\lambda}{\lambda_{\rm{opt}}},\cmark,\clin,\qlin,\# \mathcal{T}_0$, and $s$.
\end{theorem}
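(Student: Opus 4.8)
The statement to prove is Theorem~\ref{thm:ghps:optimal}, which is cited directly as \cite[Thm.~7]{GHPS:2020}. Since the paper explicitly presents this as an importable result, the ``proof'' strategy is essentially: verify that our ILG setting satisfies every hypothesis required by the abstract framework of \cite{GHPS:2020}, and then invoke their theorem verbatim. Let me plan accordingly.

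The plan is to reduce to \cite[Thm.~7]{GHPS:2020} by checking that the abstract structural hypotheses of that framework are met here. The decisive point is the energy contraction property: \cite{GHPS:2020} requires a contraction condition (labelled (C1) there) on the iterative solver, measured in the energy, and our Theorem~\ref{prop:energyreduction} establishes exactly this with contraction factor $q_{\rm ctr}$ from~\eqref{eq:energyreduction}. First I would observe that the quasi-error $\Delta_N^n$ in~\eqref{eq:quasierror}, the mesh-refinement axioms (R1)--(R3), and the estimator axioms (A1)--(A4) are imposed here in precisely the form used in \cite{GHPS:2020}; likewise Algorithm~\ref{alg:praetal} is (a reformulation of) the adaptive loop analysed there, with the stopping criterion in step~6 written in terms of $[\E(\un{n})-\E(\un{n-1})]^{\nicefrac12}$, which by~\eqref{eq:Hconstant} is comparable to $\norm{\un{n}-\un{n-1}}_X$ and hence to the linearization-error contribution used in \cite{GHPS:2020}.

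Next I would carry over the two ingredients that feed the optimality estimate. First, linear convergence of the full sequence (Theorem~\ref{thm:ghps:conv}), which is itself \cite[Thm.~4]{GHPS:2020} and follows once (C1) holds; this is what makes the total-step sums on the right of~\eqref{eq:convergencework} finite and geometric. Second, the D\"orfler-marking / optimality machinery of \cite{GHPS:2020}: under the smallness condition $0<\lambda<\lambda_{\rm opt}\theta$ with the derived parameter $\theta'<(1+\cstb^2\crel^2)^{-\nicefrac12}$ in the quasi-optimal D\"orfler range, one gets a comparison lemma showing that the marked sets have quasi-minimal cardinality relative to $\norm{u^\star}_{\mathbb{A}_s}$, and the mesh-closure estimate (R3) together with the overlay estimate (R2) converts this into the bound on $\#\mathcal{T}_{N'}-\#\mathcal{T}_0$. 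Chaining the linear convergence with the geometric-series summation over $\mathcal{Q}$ then yields the middle and right inequalities of~\eqref{eq:convergencework}; the left inequality is the (easier) lower bound, obtained by testing the $\mathbb{A}_s$-supremum against the meshes actually produced by the algorithm and using reliability (A3) together with stability (A1). The value $\lambda_{\rm opt}=\frac{1-q_{\rm ctr}}{q_{\rm ctr}\cstb}\sqrt{\nicefrac{\nu}{2}}$ arises from balancing the estimator-perturbation constant $\cstb$, the energy-norm equivalence constant $\sqrt{\nicefrac{\nu}{2}}$ from~\eqref{eq:energydifference}, and the contraction gap $1-q_{\rm ctr}$; I would simply record that this is the threshold dictated by the corresponding step in \cite{GHPS:2020}.

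The main obstacle, and the only genuine content beyond bookkeeping, is confirming that nothing in the abstract hypotheses of \cite{GHPS:2020} is stronger than what we have. In particular one must check: (i) that (C1) of \cite{GHPS:2020} is stated as an energy contraction with a constant independent of the discrete space and the iteration index — which matches~\eqref{eq:energyreduction0}--\eqref{eq:energyreduction} exactly; (ii) that the nestedness of the Galerkin spaces along refinements ($X_N\subseteq X_N^{\rm ref}$) and the inheritance of~\eqref{eq:coercive}--\eqref{eq:continuity} to subspaces hold, which they do by construction in Section~\ref{section:ailfem}; and (iii) that the stopping test and the definition of the nested initial guesses $u_{N+1}^0:=\un{\nN}$ coincide with those in \cite{GHPS:2020}. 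Once these identifications are in place, \cite[Thm.~7]{GHPS:2020} applies verbatim and delivers~\eqref{eq:convergencework} with the stated dependence of $\copt$ and $\Copt$ on the problem and adaptivity constants; I would therefore present the proof as this verification followed by a direct citation, rather than re-deriving the optimality argument.
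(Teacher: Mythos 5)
Your proposal matches the paper's approach exactly: the paper gives no independent proof of this theorem, but invokes \cite[Thm.~7]{GHPS:2020} directly after observing that the energy contraction from Theorem~\ref{prop:energyreduction} is precisely the hypothesis \cite[(C1)]{GHPS:2020}, with (R1)--(R3), (A1)--(A4), the quasi-error, and Algorithm~\ref{alg:praetal} already set up in the form used there. Your verification-plus-citation strategy is therefore correct and essentially identical to the paper's.
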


\begin{remark}
We add a few important comments on the above result.
\begin{enumerate}[(a)]
\item The significance of~\eqref{eq:convergencework} is that the quasi-error $\Delta_N^n$ from~\eqref{eq:quasierror} decays at rate $s$ (with respect to the number of elements or, equivalently, the number of degrees of freedom) if and only if rate $s$ is achievable for the discrete solutions on optimal meshes (with respect to the number of elements). If, in addition, all of the (single) steps in Algorithm~\ref{alg:praetal} can be performed at linear cost, $\mathcal{O}(\# \mathcal{T}_N)$, then the quasi-error even decays with rate $s$ with respect to the overall computational cost if and only if rate $s$ is attainable with respect to the number of elements. Since the total computational cost is proportional to the total computational time, it is therefore monitored in the subsequent numerical experiments.

\item While linear cost is, in practice, a feasible assumption for mesh-refinement, computation of the error estimator, and marking (see, e.g.,~\cite{stevenson2007,pp2020} for D\"orfler marking in linear complexity), we remark that the present result assumes that the arising linear systems are solved exactly in $\mathcal{O}(\# \mathcal{T}_N)$ operations, which is indeed reasonable for state-of-the-art solvers for sparse FEM matrices. 

\item At the price of considering only the Zarantonello linearization approach, the recent work~\cite{hpsv2021}  formulates and analyzes a full AILFEM algorithm, where also the linearized equations are solved approximately by an optimally preconditioned CG method. Overall, the AILFEM algorithm then consists of three nested loops and a triple index set $\mathcal{Q} \subseteq \N_0^3$ for mesh-refinement, Zarantonello linearization, and PCG solver steps. Theorems~\ref{thm:ghps:conv} and~\ref{thm:ghps:optimal} hold accordingly with an additional parameter $\lambda_{PCG} > 0$ for the innermost PCG solver loop, however, the analysis of~\cite{hpsv2021} requires that $0 < \lambda + \lambda_{PCG} \ll 1$ for linear convergence, and $0 < \theta + \lambda + \lambda_{PCG} \ll 1$ for optimal cost. We conjecture that, for linear convergence, it might be sufficient to have $0 < \lambda_{PCG} \ll 1$, while $\lambda > 0$ can now be arbitrary.
%
\end{enumerate}
\end{remark}
%

Finally, the following remark relates the current results to those of~\cite{GantnerHaberlPraetoriusStiftner:17,GHPS:2020,HeidWihler2:19v1}.

\begin{remark}
As for Theorem~\ref{thm:ghps:conv}, we note that~\cite{HeidWihler2:19v1} (following~\cite{GantnerHaberlPraetoriusStiftner:17}) proves only the implication
\begin{align*}
 \norm{u^\star}_{\mathbb{A}_s} < \infty
 \quad \Longrightarrow \quad
 \sup_{(N'+1,0) \in \mathcal{Q}} \Bigg(\sum_{\substack{ (N,n) \in \mathcal{Q} \\ (N,n) \leq (N',n')}} \# \mathcal{T}_N \Bigg)^{s-\varepsilon} \Delta_{N'}^{\n(N')} < \infty,
\end{align*}
for all $0 < \varepsilon < s$. Note that this statement is considerably weaker than that of Theorem~\ref{thm:ghps:optimal}, where $\varepsilon = 0$, and both relations are essentially equivalent.
\end{remark}%

\section{Numerical experiment} \label{sec:examples}

In this section, we test Algorithm~\ref{alg:praetal} with a numerical example.

\subsection{Model problem}

On an open, bounded, and polygonal domain $\Omega \subset \mathbb{R}^2$ with Lipschitz boundary $\Gamma=\partial \Omega$, we consider the quasi-linear second-order elliptic boundary value problem:
\begin{align} \label{eq:operatorscl}
\text{Find } u \in H^1_0(\Omega) \text{ such that} \quad \F(u):= - \nabla \cdot \big\{\mu(\left|\nabla u\right|^2) \nabla{u}\big\}-g=0\quad\text{in }H^{-1}(\Omega),
\end{align}
i.e., with $\nl(\nabla u)=\mu(|\nabla u|^2)\nabla u$ in~\eqref{eq:strongform}.
For $u,v\in X := H^1_0(\Omega)$, the inner product and norm on~$X$ are defined by~$(u,v)_X:=(\nabla u,\nabla v)_{L^2(\Omega)}$ and~$\norm{u}_X:=\|\nabla u\|_{L^2(\Omega)}$, respectively. Let $g \in L^2(\Omega)$ in~\eqref{eq:operatorscl}, embedded as an element in $X^\star=H^{-1}(\Omega)$. Moreover, suppose that the diffusion coefficient $\mu \in C^1([0,\infty))$ fulfills the monotonicity property
\begin{align} \label{en:assmu}
m_\mu(t-s) \leq \mu(t^2)t-\mu(s^2)s \leq M_\mu (t-s) \qquad \forall t \geq s \geq 0,
\end{align}
with constants $M_\mu\ge m_\mu>0$. Under this condition, the nonlinear operator~$\F:\,H^1_0(\Omega)\to H^{-1}(\Omega)$ from~\eqref{eq:operatorscl} satisfies~(F1) and~(F2) with $\nu=m_{\mu}$ and $\Flc=3M_{\mu}$;
see~\cite[Prop.~25.26]{Zeidler:90}. Moreover, $\F$ has a potential $\E:H^1_0(\Omega) \to  \mathbb{R}$ given by
\begin{align*}
\E(u):=\int_\Omega \psi(\left|\nabla u\right|^2) \dx-\dprod{g,u}\qquad \forall u\in H^1_0(\Omega),
\end{align*}
where~$\psi(s):=\nicefrac12\int_0^s \mu(t) \dt$, $s\ge 0$. 
The weak form of the boundary value problem~\eqref{eq:operatorscl} 
reads:
\begin{align}\label{eq:sclweak}
\text{Find } u \in H^1_0(\Omega) \text{ such that} \quad \int_\Omega \mu(|\nabla u|^2) \nabla u \cdot \nabla v \dx = \dprod{g,v} \quad \forall v \in H^1_0(\Omega).
\end{align}

For the nonlinear boundary value problem~\eqref{eq:operatorscl}, 
Propositions~\ref{prop:zarantonello}--\ref{prop:newton} apply and yield the contraction property~\eqref{eq:energyreduction0}--\eqref{eq:energyreduction} for the following linearization schemes:
\begin{enumerate}[(i)]
\item \emph{Zarantonello (or Picard) iteration,} for $\delta_Z \in (0,\nicefrac{2}{(3 M_\mu)})$:
\[
-\Delta u^{n+1}=-\Delta u^{n}-\dpa_Z\F(u^n)\qquad \forall n\ge 0.
\]
\item \emph{Ka\v{c}anov iteration:}
\begin{equation*}
- \nabla \cdot \big\{\mu(\left|\nabla u^n\right|^2) \nabla{u^{n+1}}\big\}-g=0\qquad \forall n\ge 0.
\end{equation*}
\item \emph{Newton iteration,} for a damping parameter $0 < \delta_{\mathrm{min}} \leq \delta(u^n) \leq \delta_{\mathrm{max}}<\nicefrac{2 m_\mu}{3 M_\mu}$:
\[
 \F'(u^{n})u^{n+1}=\F'(u^{n})u^{n}-\dpa(u^n) \F(u^{n})\qquad \forall n\ge 0;
\]
Here, for $u\in X$, the G\^{a}teaux derivative $\F'(u)$ of~$\F$ is given through
\[
\dprod{\F'(u)v,w}=\int_{\Omega} 2 \mu'(|\nabla u|^2)(\nabla u \cdot \nabla v)(\nabla u \cdot \nabla w) \dx + \int_{\Omega} \mu(|\nabla u|^2)\nabla v \cdot \nabla w \dx \qquad \forall v,w\in X. 
\]
\end{enumerate}

\subsection{Discretization and local refinement indicator}{}

AILFEM for~\eqref{eq:sclweak} is based on regular triangulations $\{\mathcal{T}_N\}_{N\ge 0}$ that partition the domain~$\Omega$ into open and disjoint triangles~$T \in\mathcal{T}_N$.
We consider the FEM spaces $X_N:=\left\{v \in H^1_0(\Omega): v|_T \in \mathcal{P}_1(T) \ \forall T \in \mathcal{T}_N\right\}$, where we signify by $\mathcal{P}_1(T)$ the space of all affine functions on $T \in \mathcal{T}_N$. The mesh refinement strategy $\refine(\cdot)$ in Algorithm~\ref{alg:praetal} is given by newest vertex bisection~\cite{Mitchell:91}. Moreover, for any $v \in X_N$ and any $T\in\mathcal{T}_N$, we define the local refinement indicator, respectively the global error indicator from~\eqref{eq:eta}, by 
\begin{align} \label{eq:errorestimatorscl}
\begin{split}
 \eta_N(T,v)^2&:=h_T^2 \twon{g}{T}^2+h_T \twon{\jmp{\mu(|\nabla v|^2)\nabla v}}{\partial T \setminus \Gamma}^2, \\ 
 \eta_N(v)&:=\bigg(\sum_{T \in \mathcal{T}_N}\eta_N(T,v)^2 \bigg)^{\nicefrac{1}{2}},
 \end{split}
\end{align}
where $\jmp{\cdot}$ is the normal jump across element faces, and $h_T := |T|^{\nicefrac12}$ is equivalent to the diameter of $T\in\mathcal{T}$. This error estimator satisfies the assumptions (A1)--(A4) for the problem under consideration; see, e.g.,\cite[\S 3.2]{GarauMorinZuppa:2012} or~\cite[\S 10.1]{CarstensenFeischlPagePraetorius:14}.

\subsection{Computational example}

Consider the L-shaped domain $\Omega=(-1,1)^2 \setminus ([0,1] \times [-1,0])$, and the nonlinear diffusion parameter $\mu(t)=1+\mathrm{e}^{-t}$, which satisfies~\eqref{en:assmu} with $m_\mu=1-2\exp(-\nicefrac{3}{2})$ and $M_\mu=2$. Moreover, we choose $g$ such that the analytical solution of \eqref{eq:operatorscl} is given by 
\[
u^\star(r,\varphi)=r^{\nicefrac{2}{3}}\sin\left(\nicefrac{2\varphi}{3}\right)(1-r \cos\varphi)(1+r \cos\varphi)(1- r \sin\varphi)(1+r \sin\varphi)\cos\varphi,
\]
where $r$ and $\varphi$ are polar coordinates; this is the prototype singularity for (linear) second-order elliptic problems with homogeneous Dirichlet boundary conditions in the L-shaped domain; in particular, we note that the gradient of~$u^\star$ is unbounded at the origin. 

In all our experiments below, we set the adaptive mesh refinement parameters to $\theta=0.5$, and $\cmark=1$. The computations employ an initial mesh $\mathcal{T}_0$ consisting of 192 uniform triangles and the starting guess $u_0^0 \equiv 0$. Then, the procedure is run until the number of elements exceeds $10^6$. We always choose the damping parameter $\delta=1$ for the Newton iteration, and vary the damping parameter $\delta_{Z}$ for the Zarantonello iteration, as well as the adaptivity parameter $\lambda$, cf. line 6 in Algorithm~\ref{alg:praetal}, throughout the experiments. Our implementation is based on the {\sc Matlab} package~\cite{FunkenPraetoriusWissgott:11} with the necessary modifications.

In general, for the Newton scheme, we note that choosing the damping parameter to be~$\dpa=1$ (potentially resulting in quadratic convergence of the iterative linearization close to the solution) is in discord with the assumption in (iii) above, and thus might lead to a divergent iteration for the given boundary value problem (cf.~\cite{AmreinWihler:14}). Our numerical computations illustrate, however, that this is not of concern in the current experiments. Indeed, for $\dpa=1$, the bound~\eqref{eq:Hconstant} from~(F4) remains satisfied in each iteration. Otherwise, a prediction and correction strategy which obeys the bound~\eqref{eq:Hconstant}, could be employed (see~\cite[Rem.~2.8]{HeidWihler:19v2}). This would guarantee the convergence of the (damped) Newton method.

\begin{enumerate}[(1)]
\item $\delta_Z=0.1$ and $\lambda=0.5$: In Figure~\ref{fig:NSC203}, we display the performance of Algorithm~\ref{alg:praetal} with respect to both the number of elements and the measured computational time. We clearly see a convergence rate of $-\nicefrac12$ for the Ka\v{c}anov and Newton method, which is optimal for linear finite elements. Moreover, the Zarantonello iteration has a pre-asymptotic phase of reduced convergence, which becomes optimal for finer meshes. In Figure~\ref{fig:NSCF203} (left) we observe that the energy contraction factor given by 
\begin{equation}\label{eq:EC}
\varkappa_N:=\frac{\E(u_N^{\n(N)})-\E(u^\star)}{\E(u_N^{0})-\E(u^\star)}
\end{equation}
is inferior for the Zarantonello iteration in the initial phase (compared to the Ka\v{c}anov and Newton methods), which might explain the reduced convergence. This contraction factor becomes better for an increased number of iterations, see Figure~\ref{fig:NSCF203} (right), thereby leading to the asymptotically optimal convergence rate for the Zarantonello iteration. Finally, in Figure~\ref{fig:quotient} (left), we display the quotient 
\begin{equation} \label{eq:quotient}
\kappa_N:=\frac{\H(u_N^{\n(N)-1})-\E(u_N^{\n(N)})}{\big\|u_N^{\n(N)-1}-u_N^{\n(N)}\big\|^2_X},
\end{equation} 
which experimentally verifies the assumption (F4).

\begin{figure} 
 \subfloat{\includegraphics[width=0.49\textwidth]{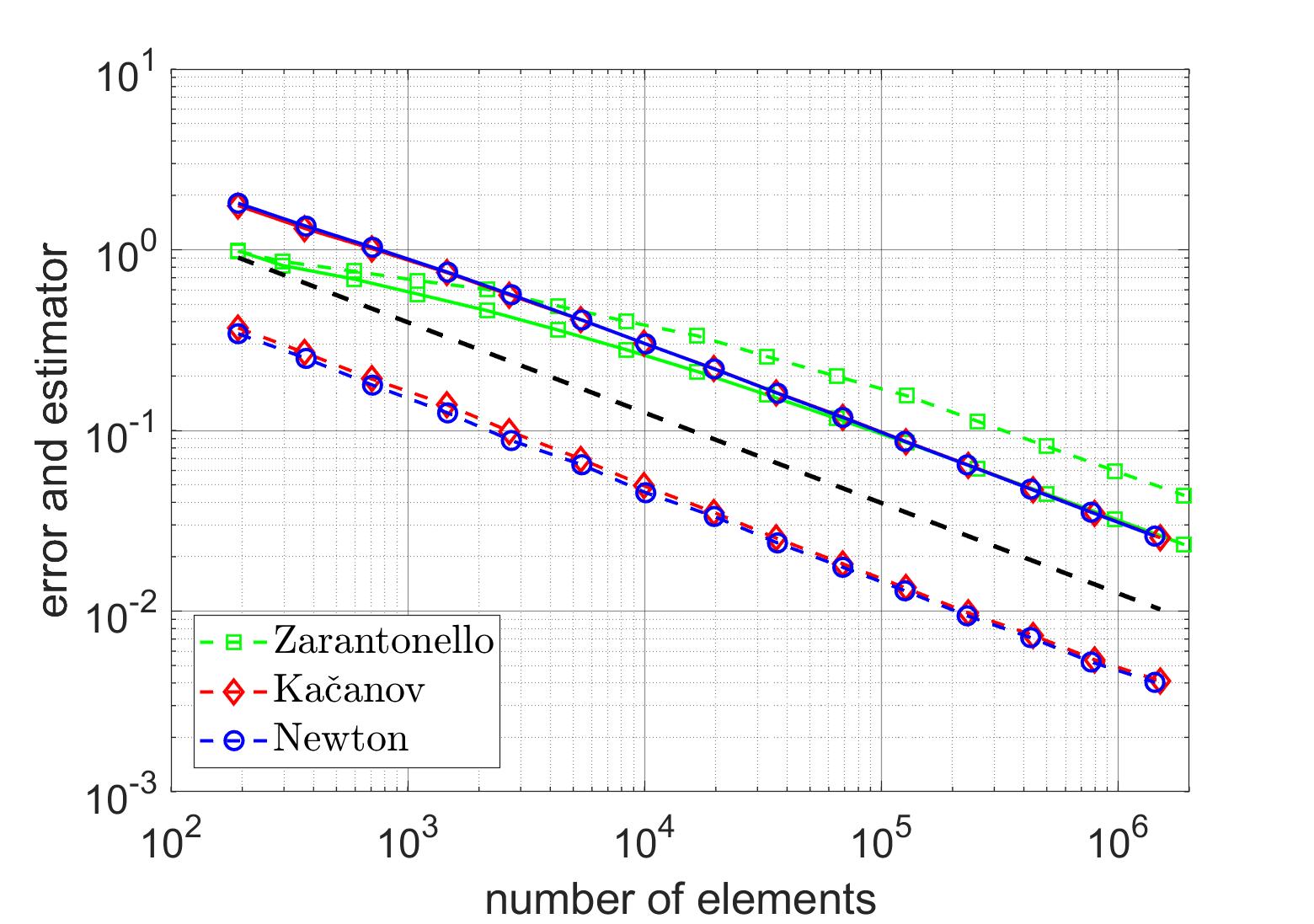}}\hfill
 \subfloat{\includegraphics[width=0.49\textwidth]{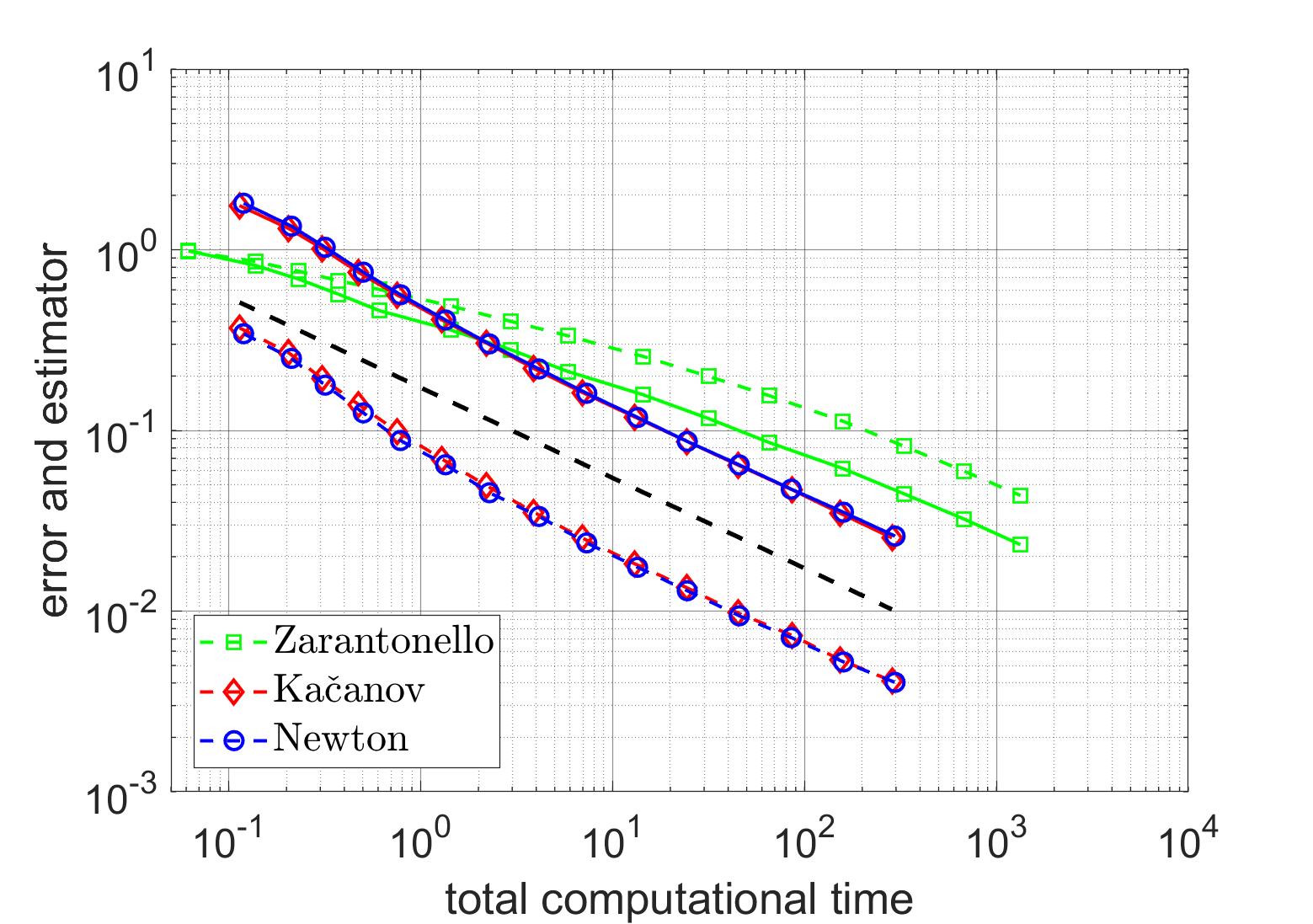}}
 \caption{$\delta_Z=0.1$ and $\lambda=0.5$: Performance plot for adaptively refined meshes with respect to the number of elements (left) and the total computational time (right). The solid and dashed lines correspond to the estimator and the error, respectively. The dashed lines without any markers indicate the optimal convergence order of $-\nicefrac12$ for linear finite elements.} \label{fig:NSC203}
\end{figure}

\begin{figure} 
 \subfloat{\includegraphics[width=0.49\textwidth]{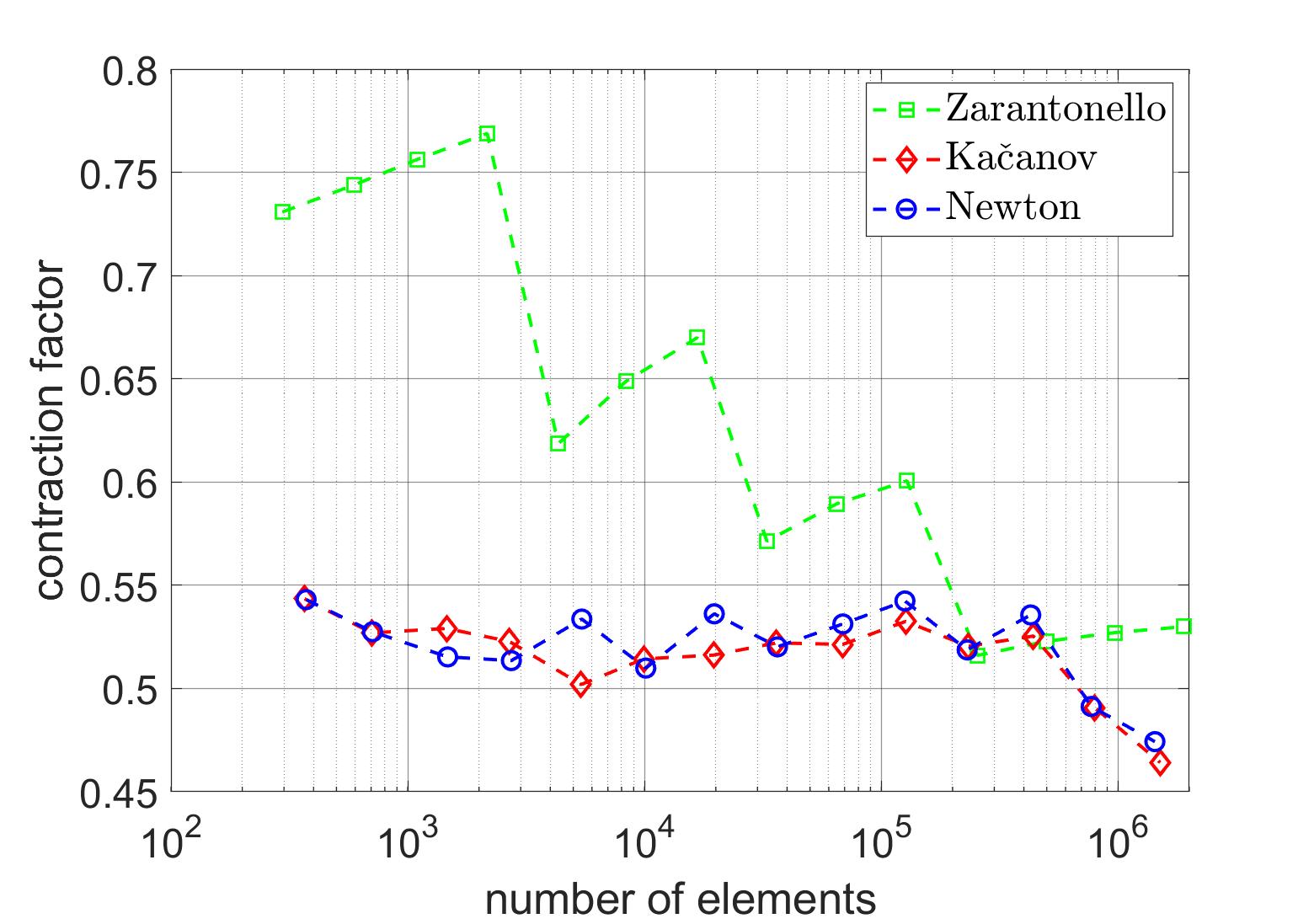}}\hfill
 \subfloat{\includegraphics[width=0.49\textwidth]{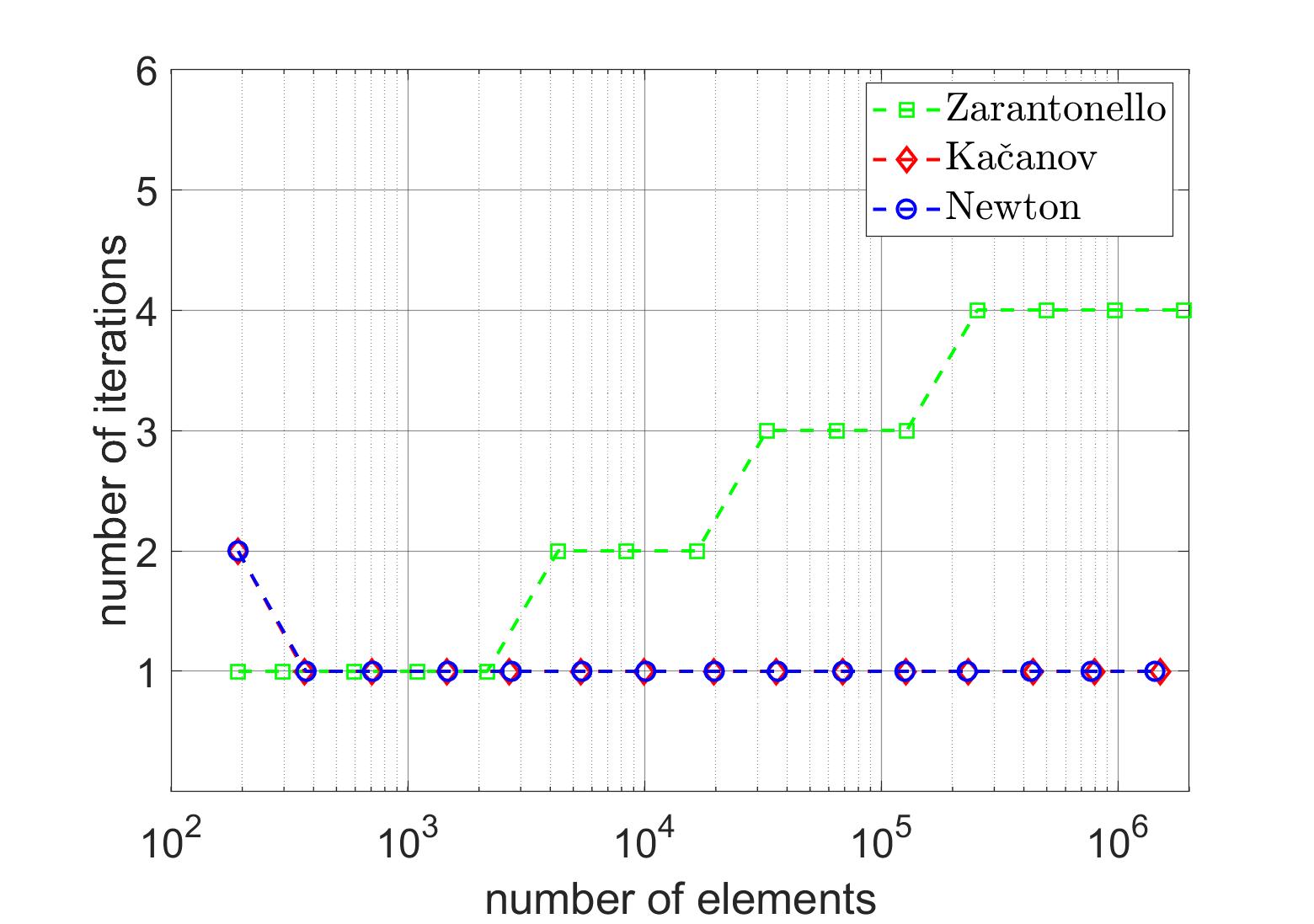}}
 \caption{$\delta_Z=0.1$ and $\lambda=0.5$: The contraction factor $\varkappa_N$ (left, see~\eqref{eq:EC}) and the number of iterations (right) on each finite element space.} \label{fig:NSCF203}
\end{figure}

\begin{figure} 
 \subfloat{\includegraphics[width=0.49\textwidth]{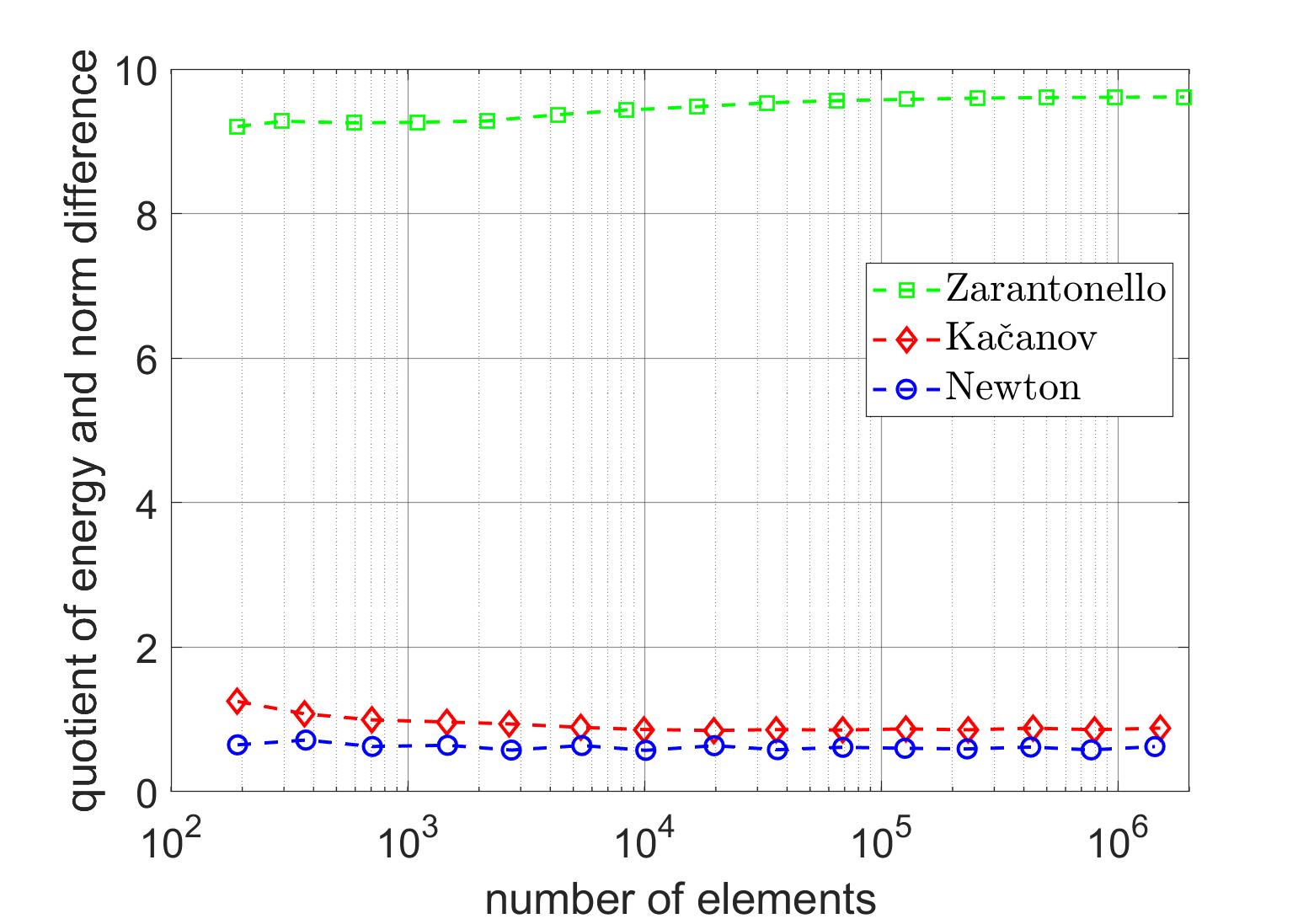}}\hfill
 \subfloat{\includegraphics[width=0.49\textwidth]{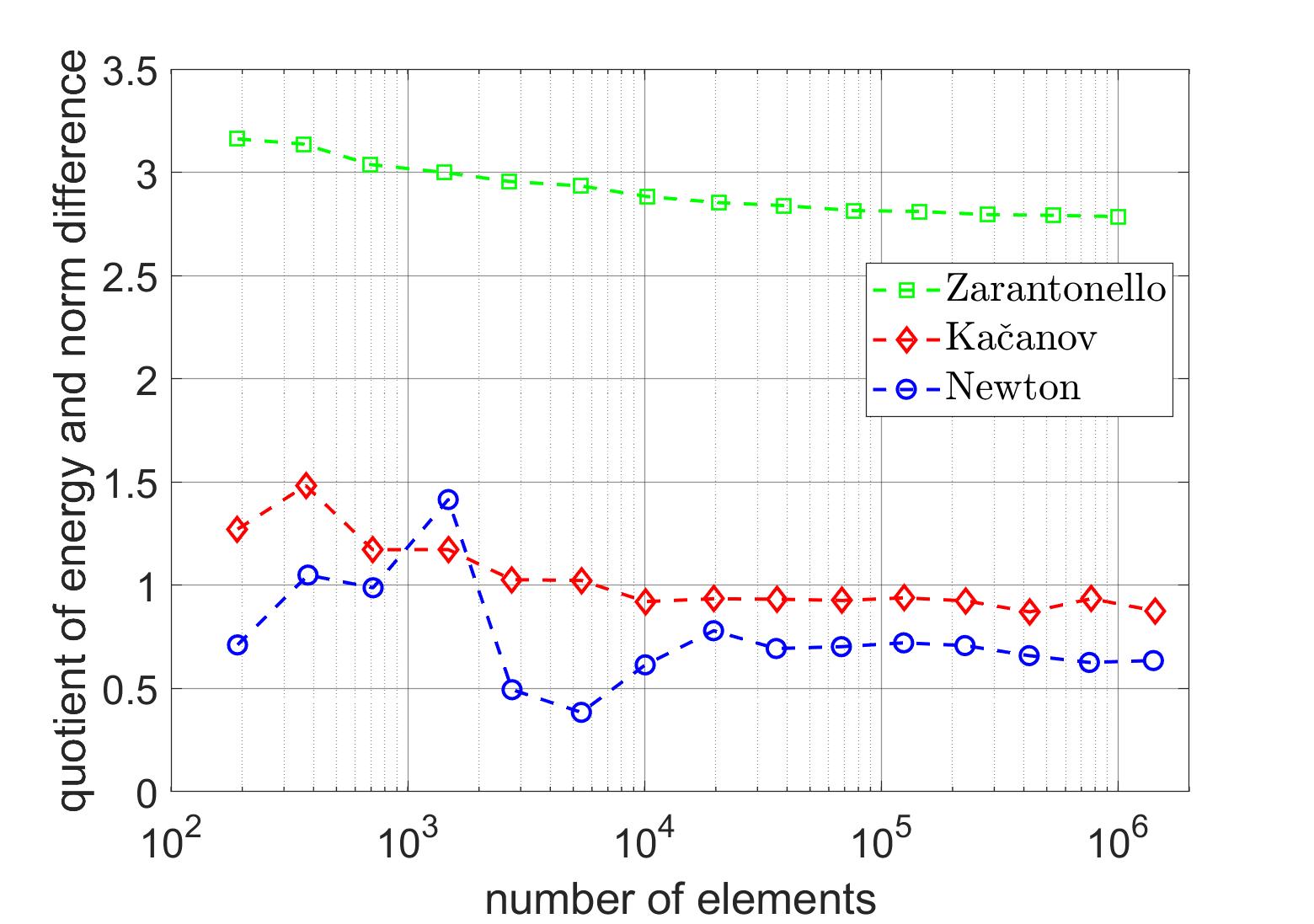}}
 \caption{The quotient $\kappa_N$ from~\eqref{eq:quotient} on each finite element space for $\delta_Z=0.1$ and $\lambda=0.5$ (left) and $\delta_Z=0.3$ and $\lambda=0.1$ (right), respectively.} \label{fig:quotient}
\end{figure}

\item $\delta_Z=0.3$ and $\lambda=0.1$: As before, in Figure~\ref{fig:NSC1003}, we display the performance of Algorithm~\ref{alg:praetal} with respect to both the number of elements and the total computational time. We clearly observe the optimal convergence rate of $-\nicefrac12$ for all of the three iteration schemes presented above from the initial mesh onwards. In contrast to the experiment before, the energy contraction factor~$\varkappa_N$ from~\eqref{eq:EC} is now of comparable size for all iteration schemes, as we can see from Figure~\ref{fig:NSCF1003} (left). Moreover, the number of iterations does not significantly differ for the three iterative methods. Again, we plot in Figure~\ref{fig:quotient} (right) the quotient~\eqref{eq:quotient} for the numerical evidence of the assumption (F4).

\begin{figure} 
 \subfloat{\includegraphics[width=0.49\textwidth]{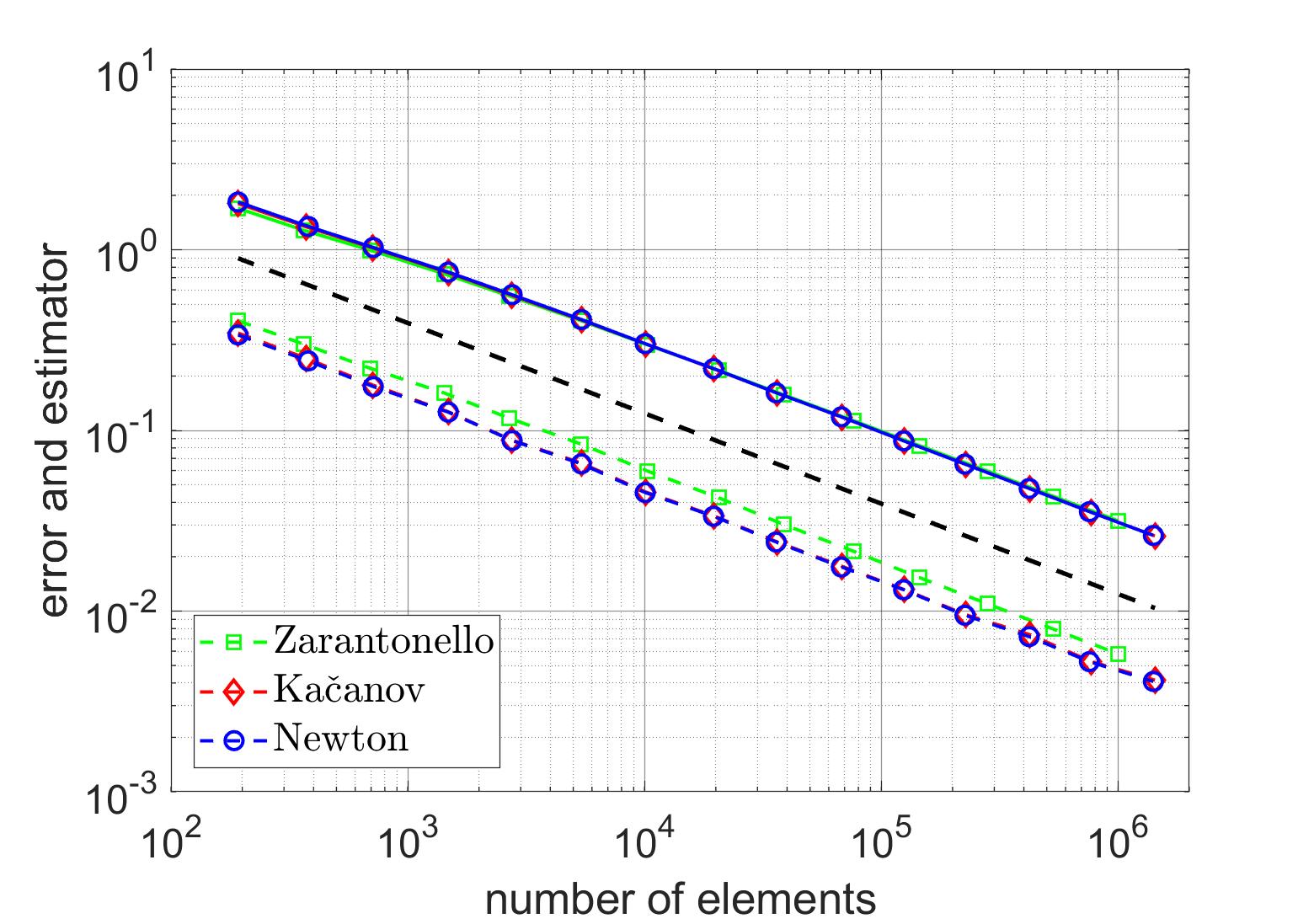}}\hfill
 \subfloat{\includegraphics[width=0.49\textwidth]{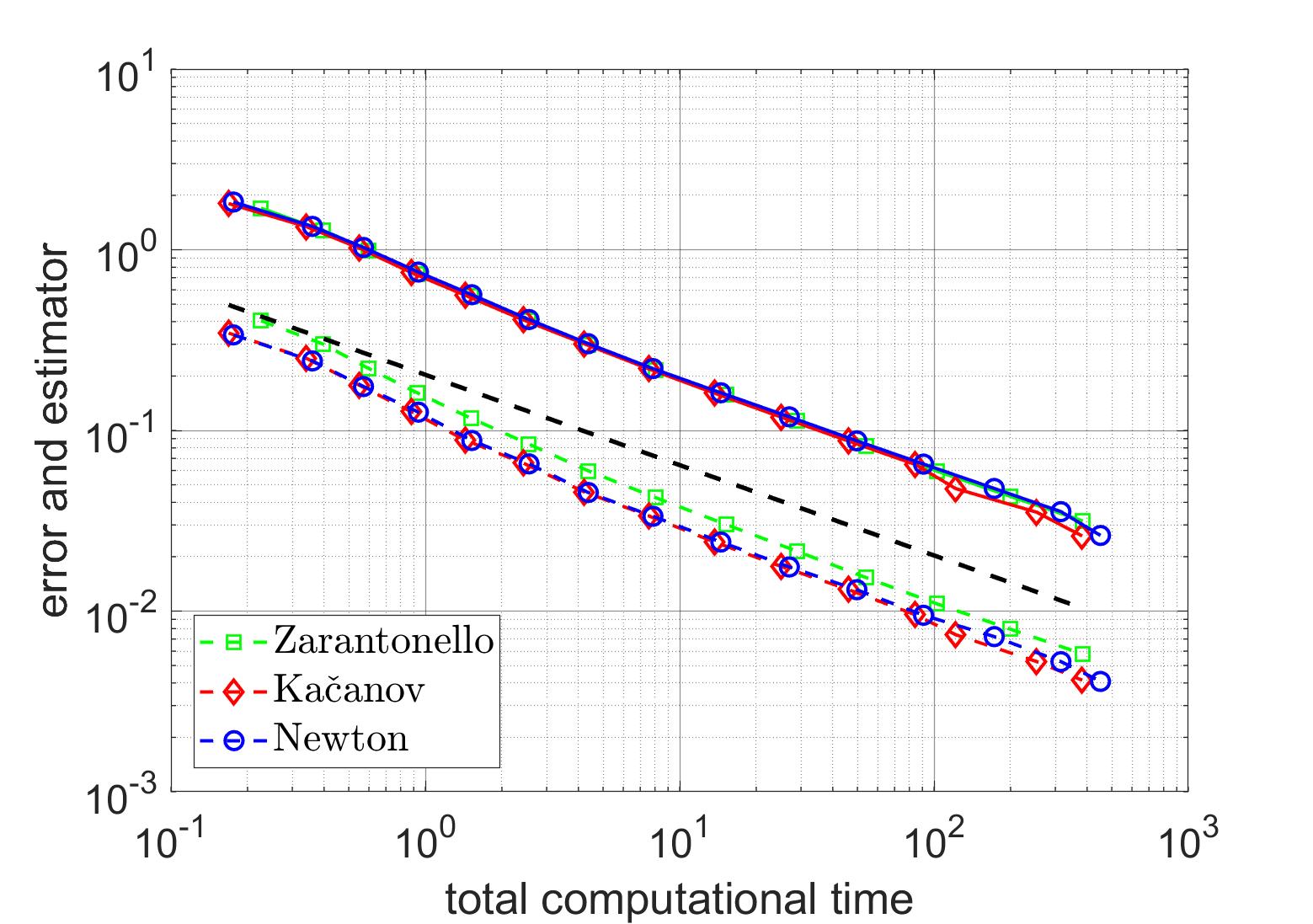}}
 \caption{$\delta_Z=0.3$ and $\lambda=0.1$: Performance plot for adaptively refined meshes with respect to the number of elements (left) and the total computational time (right). The solid and dashed lines correspond to the estimator and the error, respectively. The dashed lines without any markers indicate the optimal convergence order of $-\nicefrac12$ for linear finite elements.} \label{fig:NSC1003}
\end{figure}

\begin{figure} 
 \subfloat{\includegraphics[width=0.49\textwidth]{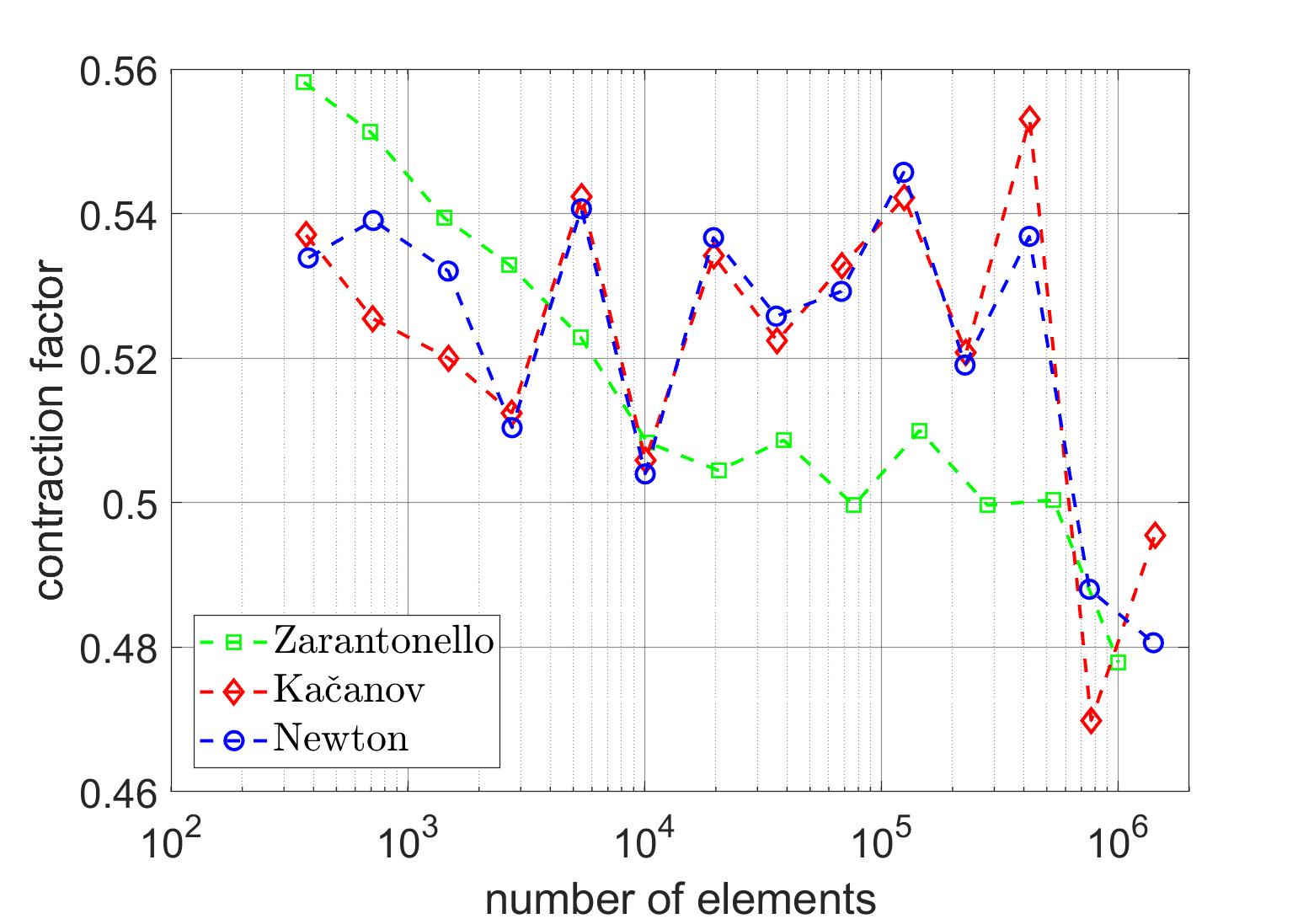}}\hfill
 \subfloat{\includegraphics[width=0.49\textwidth]{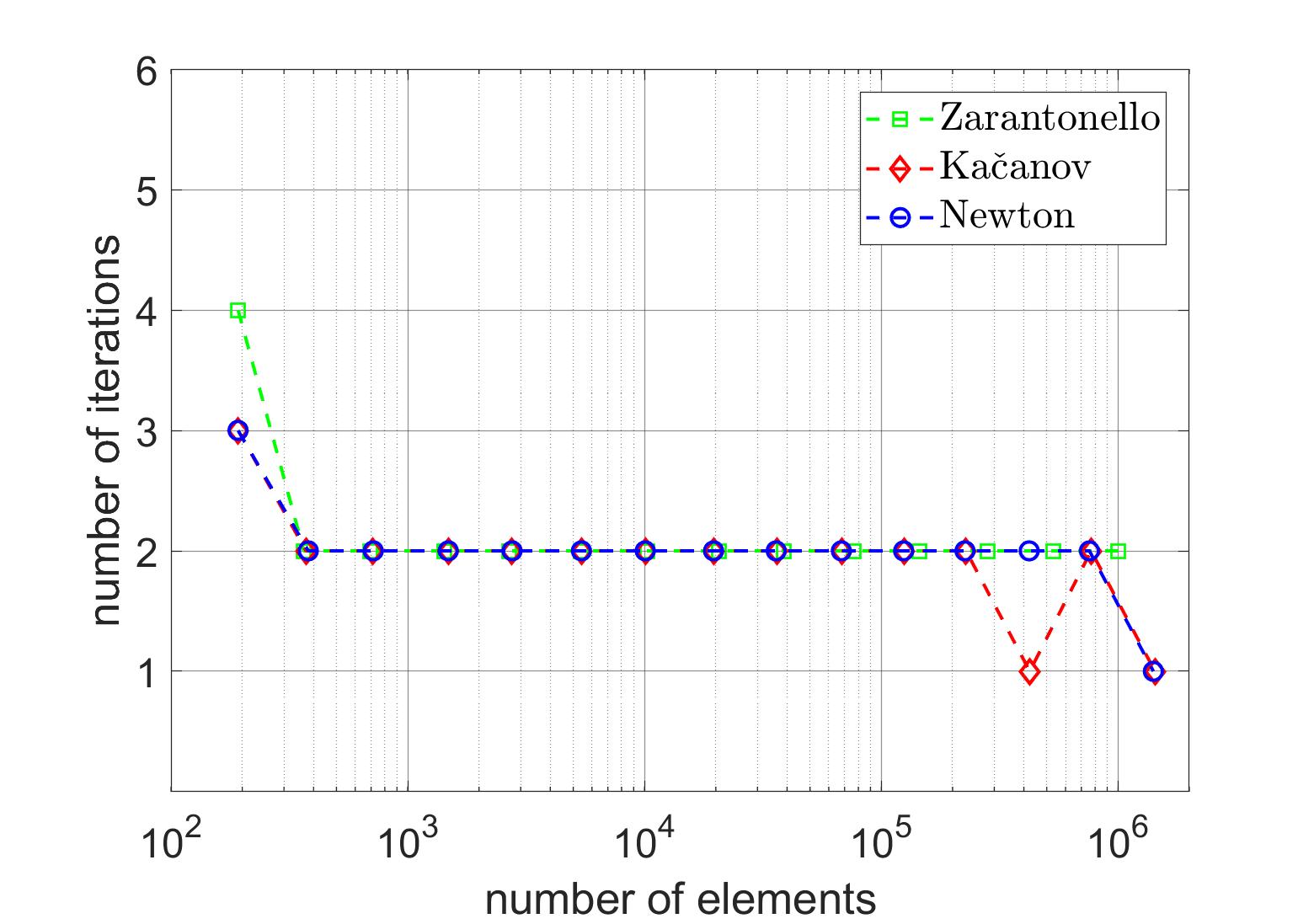}}
 \caption{$\delta_Z=0.3$ and $\lambda=0.1$: The contraction factor~$\varkappa_N$ (left, see~\eqref{eq:EC}) and the number of iterations (right) on each finite element space.} \label{fig:NSCF1003}
\end{figure}

\item $\delta_Z=0.3$ and $\lambda=0.01$: Once more, we observe optimal convergence rate for all our three iteration schemes with respect to both the number of elements and the total computational time, see Figure~\ref{fig:NSC10003}. The total computational times obtained, however, differ noticeably, see Figure~\ref{fig:NSC10003} (right), as a consequence of the varying number of iterative linearization steps of the three methods, see Figure~\ref{fig:NSCF10003} (right). In contrast, the energy contraction factor~$\varkappa_N$ from~\eqref{eq:EC} almost coincides for the different iteration schemes, see Figure~\ref{fig:NSCF10003} (left), which is due to the small adaptivity parameter $\lambda=0.01$. 

\begin{figure} 
 \subfloat{\includegraphics[width=0.49\textwidth]{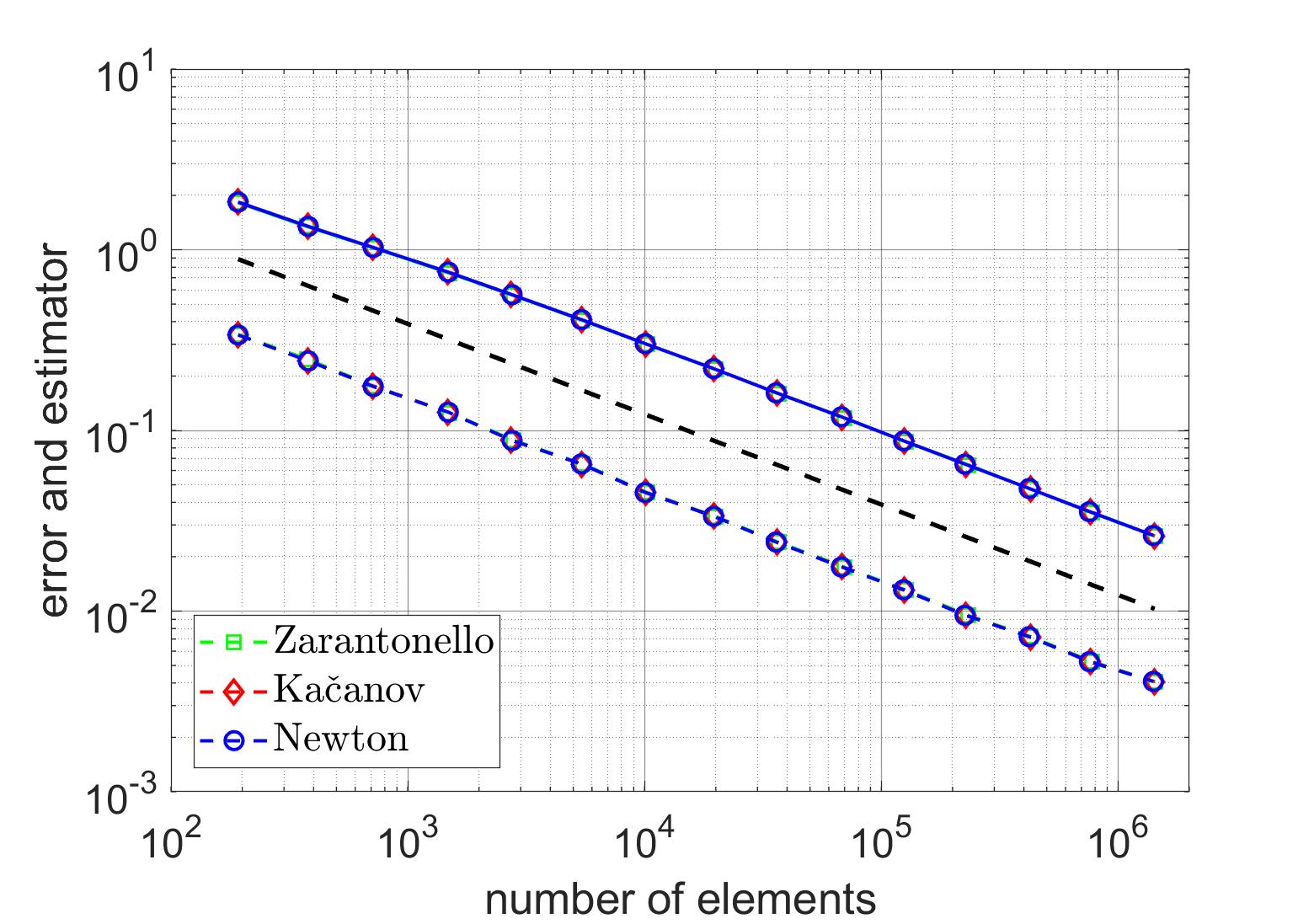}}\hfill
 \subfloat{\includegraphics[width=0.49\textwidth]{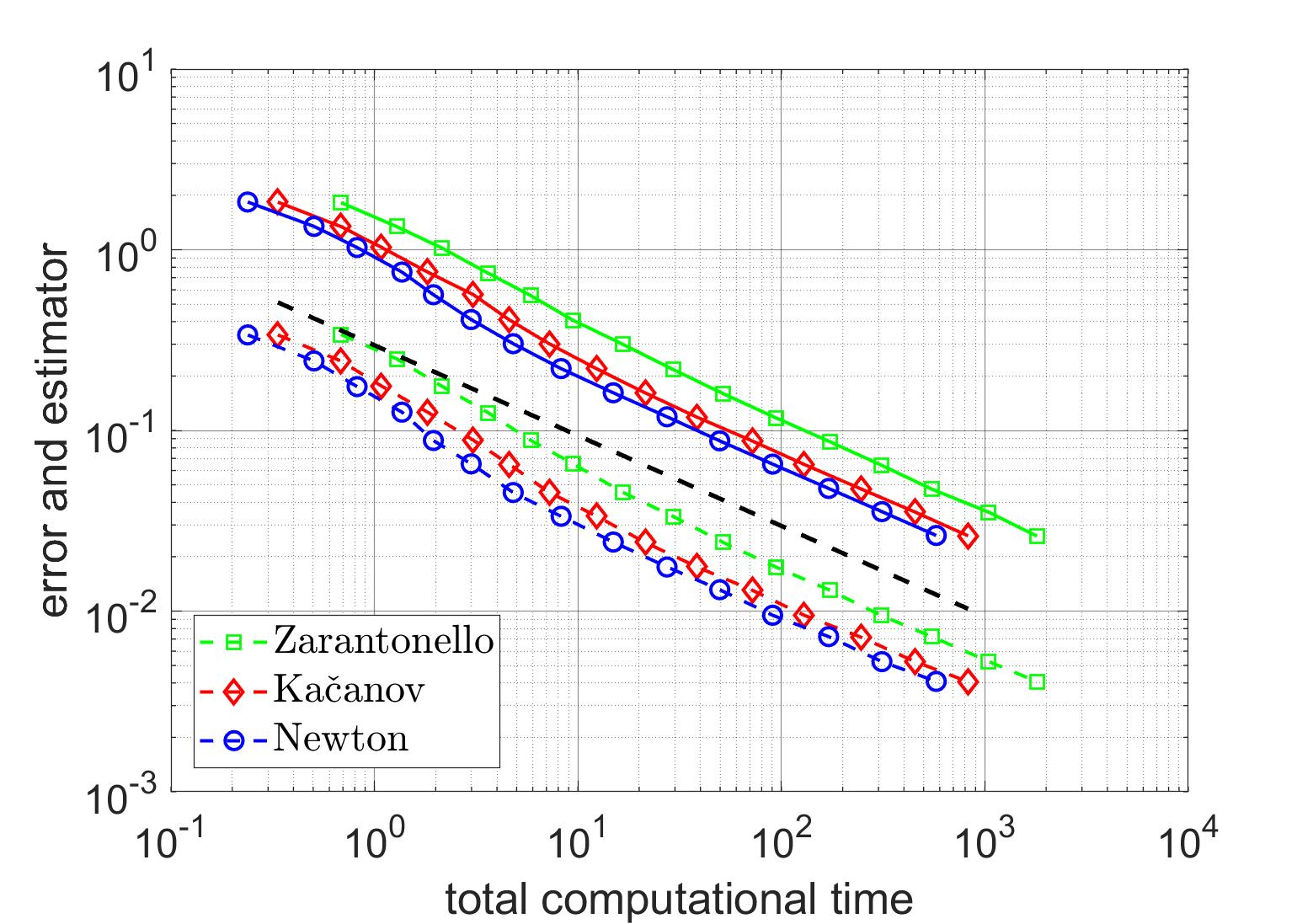}}
 \caption{$\delta_Z=0.3$ and $\lambda=0.01$: Performance plot for adaptively refined meshes with respect to the number of elements (left) and the total computational time (right). The solid and dashed lines correspond to the estimator and the error, respectively. The dashed lines without any markers indicate the optimal convergence order of $-\nicefrac12$ for linear finite elements.} \label{fig:NSC10003}
\end{figure}

\begin{figure} 
 \subfloat{\includegraphics[width=0.49\textwidth]{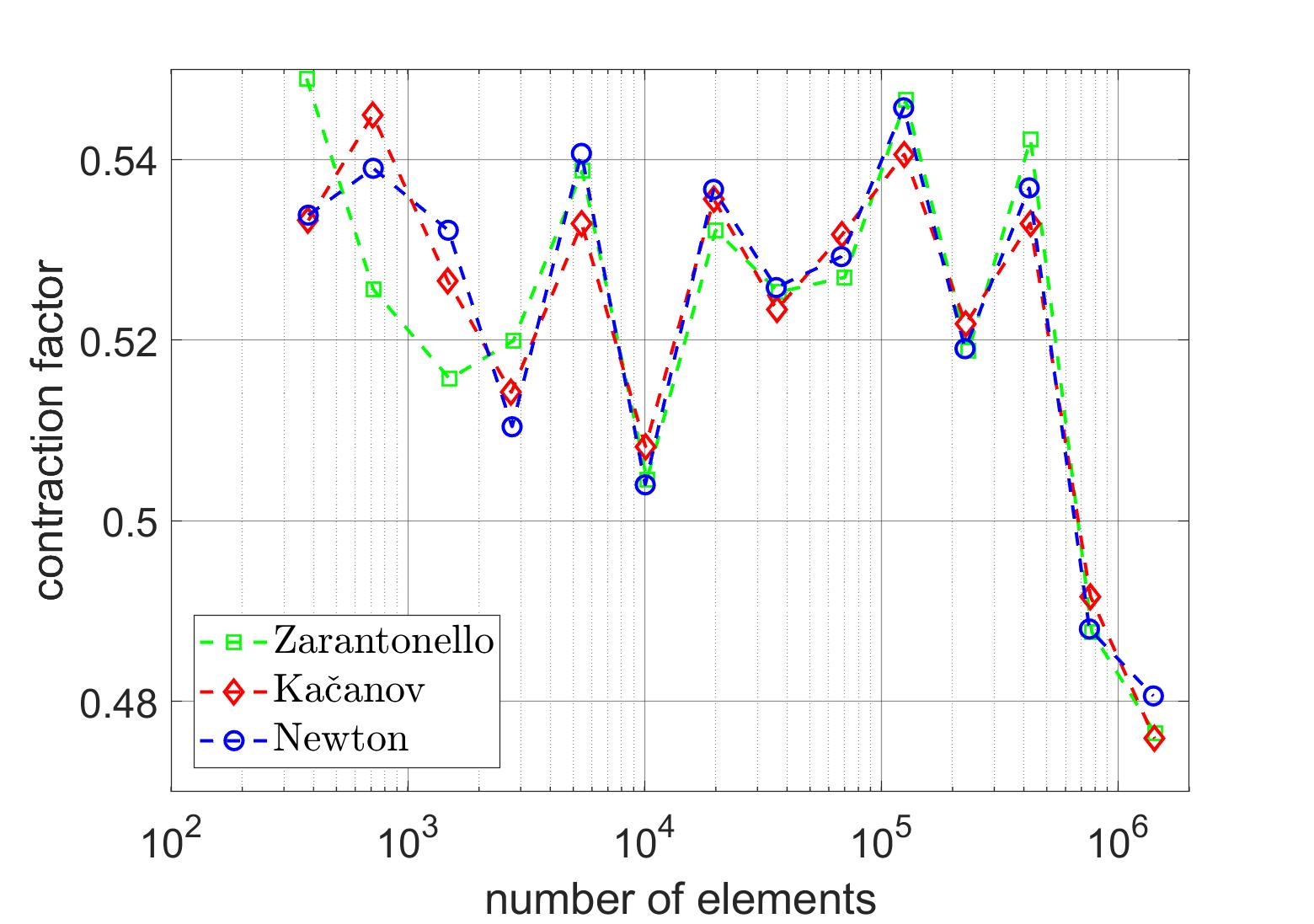}}\hfill
 \subfloat{\includegraphics[width=0.49\textwidth]{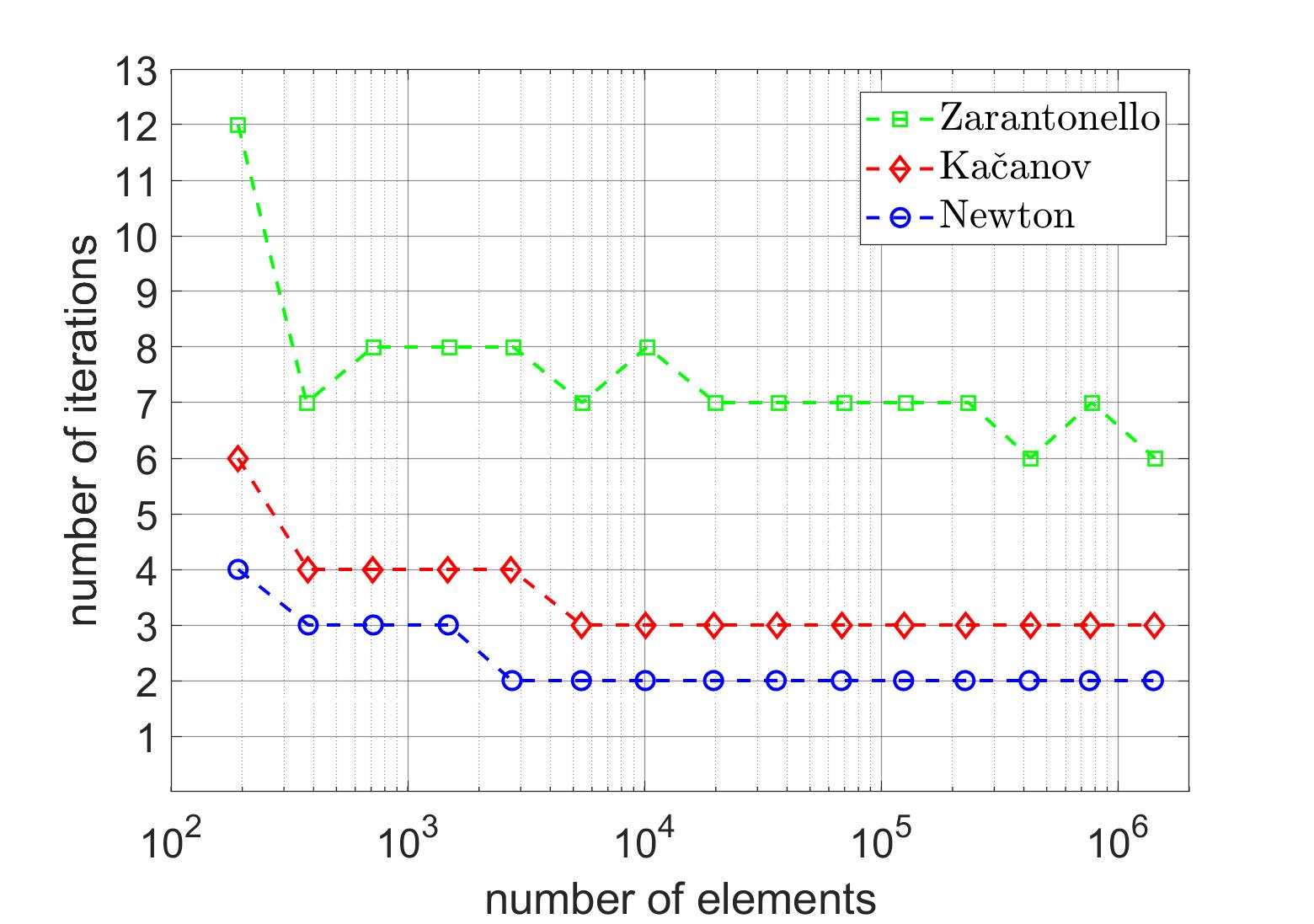}}
 \caption{$\delta_Z=0.3$ and $\lambda=0.01$: The contraction factor~$\varkappa_N$ (left, see~\eqref{eq:EC}) and the number of iterations (right) on each finite element space.} \label{fig:NSCF10003}
\end{figure}

\end{enumerate}

\section{Conclusions}\label{section:conclusion}

We have established a new energy contraction property for the  
iterative linearization Galerkin method (ILG, see~\eqref{eq:itweakY}). This result is the decisive prerequisite to apply the convergence theorems from~\cite{GHPS:2020}. In particular, the sequence generated by the adaptive iterative linearized finite element method (AILFEM, see Algorithm~\ref{alg:praetal}) converges always linearly with respect to both the adaptive linearization and the adaptive mesh-refinement, and, under some additional assumptions, even with optimal rate with respect to the overall computational cost. This is confirmed by our numerical test in the context of quasi-linear elliptic problems, where we also underline that the theoretical constraints on the adaptivity parameters $\lambda > 0$ and $0 < \theta \le 1$ are less restrictive in practice.

\bibliographystyle{amsalpha}
\bibliography{references}
\end{document}